\documentclass{article}
\usepackage{graphicx} 

\usepackage{a4wide}
\usepackage[utf8]{inputenc}
\usepackage[a4paper, margin=2.3cm]{geometry}
\usepackage{amsmath,amssymb,amsthm}
\usepackage{color}
\usepackage{xcolor}
\usepackage{parskip}
\usepackage{hyperref}
\usepackage{parskip}
\usepackage{setspace}
\usepackage{enumitem}
\usepackage{comment}
\usepackage{float} 

\usepackage{geometry}
\usepackage{booktabs}
\newtheorem{theorem}{Theorem}
\newtheorem{proposition}[theorem]{Proposition}
\newtheorem{lemma}[theorem]{Lemma}
\newtheorem{corollary}[theorem]{Corollary}
\newtheorem{remark}[theorem]{Remark}

\newcommand{\F}{\mathbb{F}}

\newcommand{\abs}[1]{\left|#1\right|}

\title{Parabolic distance in $\F_q^2$: a sharp exponent and new results}
\author{Dao Nguyen Van Anh\thanks{The Dewey Schools, Hanoi. ~~~~~~~~~~~~~~~~~~~~~~~~~~~~~~~~~~~~~~~~~~ Email: {\tt anh.daonguyenvan@thedeweyschools.edu.vn}}  \and Steven Senger\thanks{Department of Mathematics, Missouri State University. ~~~~~ Email: {\tt StevenSenger@MissouriState.edu}} \and Dung The Tran \thanks{VNU University of Science, Hanoi, Vietnam. ~~~~~~~~~~~~~~~~~~~ Email: {\tt tranthedung56@gmail.com}} \and Le Anh Vinh\thanks{Vietnam Institute of Educational Sciences. ~~~~~~~~~~~~~~~~~~~~~~ Email: {\tt vinhle@vnies.edu.vn}}} 
\date{}

\begin{document}
\maketitle

\begin{abstract}
We study the parabolic variant of the Erd\H os--Falconer distance problem in finite fields. That is, if $q$ is odd, we seek size thresholds beyond which any subset $E\subset \mathbb F_q^2$ will determine many distinct parabolic distances. This problem has a rich history because the parabolic distance functional shares many properties with the standard distance functional, but exhibits many distinct behaviors. Here we begin with rather standard Fourier analytic arguments, but diverge into additive combinatorics to handle the central obstructions. We provide a suite of positive results and corresponding sharpness examples.
\end{abstract}

\textbf{Keywords:} Erd\H os--Falconer distance problem, parabolic distance, finite fields, additive energy, incidence geometry, Fourier analysis

\textbf{MSC Classification}: 52C10, 11T23, 42B10, 05B25


\tableofcontents

\section{Introduction}
Let $q$ be an odd prime power, and let $\F_q$ denote the finite field with $q$ elements. The distance between two points $\mathbf{x}=(x_1, x_2)$ and $\mathbf{y}=(y_1, y_2)$ in $\mathbb{F}_q^2$ is defined by 
\[\|\mathbf{x}-\mathbf{y}\|=(x_1-y_1)^2+(x_2-y_2)^2.\]
For $E\subset \mathbb{F}_q^2$, let $\Delta(E)$ be the set of distinct distances determined by pairs of points in $E$, i.e. 
\[\Delta(E):=\{\|\mathbf{x}-\mathbf{y}\|\colon \mathbf{x}, \mathbf{y}\in E\}.\]

The well-known Erd\H{o}s--Falconer distance conjecture in $\mathbb{F}_q^2$ states that if $|E|\gg q$ then $|\Delta(E)|\gg q$. Throughout the paper, we adopt the notation $A\gg B$ to mean $A \ge C B$ for some constant $C>0$ independent of $q$ (or $p$), and $A\sim B$ to denote $C_1 A \ge B \ge C_2 A$ for constants $C_1, C_2>0$ independent of $q$ (or $p$).

In \cite{IR07}, Iosevich and Rudnev used a Fourier-analytic framework to prove that if $|E|\ge 4q^{ \frac{3}{2}}$, then $\Delta(E)=\mathbb{F}_q$. The exponent $\frac{3}{2}$ was subsequently improved to $\frac{4}{3}$ by Chapman, Erdo\u{g}an, Hart, Iosevich, and Koh \cite{CEHIK12} using tools from restriction/extension theory associated to circles. In a 2022-breakthrough paper, Murphy, Petridis, Pham, Rudnev, and Stevens \cite{MPPRS22} further reduced this exponent to $\frac{5}{4}$ over prime fields by combining algebraic methods with incidence geometry.

In a recent paper \cite{phamYoo23}, Pham and Yoo discovered several surprising applications of the distance problems to intersection patterns and incidence geometry. They also proved a rotational version of the Erd\H{o}s--Falconer distance conjecture by showing that for any two sets $E,F\subset \mathbb{F}_p^2$, if $|E|,|F|\gg p$, then for almost every rotation $g\in SO_2(\mathbb{F}_p)$, one has
\[
|\Delta(E,gF)|\gg p,
\]
where
\[
\Delta(E,gF):=\{\|\mathbf{x}-g\mathbf{y}\|:\mathbf{x}\in E,\ \mathbf{y}\in F\}.
\]
Equivalently, for each rotation $g\in SO_2(\mathbb{F}_p)$, one may consider the distance map
\[
f_g(\mathbf{x},\mathbf{y}):=\|\mathbf{x}-g\mathbf{y}\|,
\qquad (\mathbf{x},\mathbf{y})\in \mathbb{F}_p^2\times\mathbb{F}_p^2,
\]
so that $\Delta(E,gF)$ is precisely the image set of $f_g$ on $E\times F$. Thus, the result of Pham and Yoo shows that for sufficiently large sets $E$ and $F$, most members of this family take many distinct values on $E\times F$.

In addition to the progression of study of the usual distance functional, it is natural to ask whether a similar phenomenon persists for other functionals. Indeed, the study of other such related problems has had its own parallel branch of exploration. In \cite{KS12}, Koh and Shen proved a suite of results about a wide family of functionals similar the standard distance functional given here, except coming from more general types of polynomials. Iosevich and Hart explored the dot product in \cite{IH08}, and this lead to improved results on the sum-product phenomena in finite fields. Another important example is the direction functional, studied by Iosevich, Morgan, and Pakianathan in \cite{IMP11}, the exploration of which lead to the affirmative answer of the Fuglede conjecture for finite fields in two dimensions, due to Iosevich, Mayeli, and Pakianathan in \cite{IMP17}. This is such a wide area of study that to list all relevant work here would be infeasible, but the interested reader can look at \cite{CL24, kohkoh, KPV2021, L19, LMP20, phamboxing24} and the references contained therein for a more complete picture.

Another particularly compelling example that has seen interest over the years is the paraboloid distance function. In this setting, the exponent of $\frac{3}{2}$ with the full-distance conclusion are relatively straightforward to obtain (we give a complete statement and proof in Section \ref{appendix-section} below), whereas $\frac{4}{3}$ is the first genuinely nontrivial benchmark.\footnote{We thank Alex Iosevich for raising this question in several conversations with the first and fourth listed authors.} This benchmark is especially natural in view of the classical distance problem, where the $\frac{4}{3}$-threshold is tied to restriction/extension estimates associated to circles. Although the corresponding restriction theory for spheres is notoriously difficult, substantially more is known for paraboloids; see \cite{IK10} due to Iosevich and Koh. This suggests that the paraboloid distance problem may be more accessible at the $\frac{4}{3}$ level, and it is this question that motivates the present paper.

For $\mathbf{x}=(x_1,x_2)$ and $\mathbf{y}=(y_1,y_2)$ in $\F_q^2$, define the \emph{parabolic distance} by
\begin{equation}\label{eq:def-parabolic-distance}
\|\mathbf{x}-\mathbf{y}\|_{P}:=(x_2-y_2)+(x_1-y_1)^2,
\end{equation}
and, for $E\subset \F_q^2$, let
\begin{equation*}
\Delta_P(E):=\{\|\mathbf{x}-\mathbf{y}\|_{P}: \mathbf{x}, \mathbf{y}\in E\}.
\end{equation*}

In contrast to the classical distance function, we first construct examples showing that the exponent $\frac{3}{2}$ is sharp for the parabolic distance function.

\begin{theorem}\label{thm2}
The exponent $\frac{3}{2}$ is optimal. More precisely:
\begin{enumerate}
    \item Over prime fields, for every $\varepsilon\in(0,1)$, there exists a set $E\subset \F_p^2$ such that
    \[
    |E|=p^{\frac32-\varepsilon}
    \qquad\text{and}\qquad
    |\Delta_P(E)|=o(p).
    \]
    \item Over arbitrary finite fields, for every rational number $\varepsilon\in(0,\tfrac12)$, there exists a set $E\subset \F_q^2$ such that
    \[
    |E|=q^{\frac32-\varepsilon}
    \qquad\text{and}\qquad
    |\Delta_P(E)|=o(q).
    \]
\end{enumerate}
\end{theorem}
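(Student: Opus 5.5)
The plan is to use product sets $E=A\times B$. For $\mathbf{x}=(a,b)$ and $\mathbf{y}=(a',b')$ in $A\times B$ we have $\|\mathbf{x}-\mathbf{y}\|_P=(b-b')+(a-a')^2$, so
\[
\Delta_P(A\times B)\subseteq (B-B)+(A-A)^{2},\qquad (A-A)^2:=\{(a-a')^2: a,a'\in A\}.
\]
It therefore suffices to choose $A$ and $B$ so that $|A||B|$ is as large as required while the sumset on the right has size $o(q)$. The first coordinate should contribute only about $|A|^2$ values instead of $q$, and the second coordinate should have $|B|$ small compared with $q$. The two parts of Theorem \ref{thm2} use two realizations of this idea: integer intervals that avoid wrap-around in $\F_p$, and subfields/subspaces in $\F_q$.

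\emph{Prime fields.} I would identify $\F_p$ with $\{0,1,\dots,p-1\}$ and take $A=\{0,1,\dots,\lfloor p^{1/2-\varepsilon/2}\rfloor\}$ and $B$ an interval of length about $p^{1-\varepsilon/2}$, with the length adjusted so that $|A||B|=p^{3/2-\varepsilon}$ up to rounding. Then every $(a-a')^2$ is an integer in $[0,p^{1-\varepsilon}]$, and $B-B\subseteq[-p^{1-\varepsilon/2},p^{1-\varepsilon/2}]$. So, viewed as integers, $(B-B)+(A-A)^2$ lies in an interval of length $O(p^{1-\varepsilon}+p^{1-\varepsilon/2})$. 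Reducing mod $p$ can only shrink this set, so $|\Delta_P(E)|\ll p^{1-\varepsilon/2}=o(p)$. The only care needed is the rounding, so that $|E|$ matches $p^{3/2-\varepsilon}$; this is cosmetic.

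\emph{General fields.} Intervals no longer help when the characteristic is small, for example $q=3^n$, because there is no notion of "small" integers. Instead I would use the subfield structure. Write $\varepsilon=a/b$ and let $q=p^n$ with $2b\mid n$, which gives infinitely many $q$. Take $A=\F_{p^{n/2}}=\F_{\sqrt q}$, which is closed under differences and squaring, so $(A-A)^2\subseteq A$. Take $B$ to be an $\F_p$-linear subspace of $\F_q$ of dimension $n(1-\varepsilon)$ that contains $A$. Such a $B$ exists exactly because $\varepsilon<\tfrac12$ gives $n(1-\varepsilon)\ge n/2$; this is where the restriction $\varepsilon\in(0,\tfrac12)$ enters. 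Then $(B-B)+(A-A)^2\subseteq B+A=B$. Hence $|\Delta_P(E)|\le |B|=q^{1-\varepsilon}=o(q)$, while $|E|=q^{1/2}\cdot q^{1-\varepsilon}=q^{3/2-\varepsilon}$ exactly.

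The only genuine obstacle is the general-field case, where one must replace "short intervals with small squares" by an algebraic object closed under the map $(a,a')\mapsto(a-a')^2$. The subfield $\F_{\sqrt q}$ is the natural choice. Its cost is that $|A|$ is forced to be $\sqrt q$ (or $q^{k/n}$ for a proper divisor $k$), which is why we need $\varepsilon$ rational with the divisibility condition on $n$. A secondary point is to phrase "$o(q)$" along the sequence $q=p^n$ with $n\to\infty$ through multiples of $2b$; for fixed $p$ and $n\to\infty$, $|B|/q=q^{-\varepsilon}\to0$ as required.
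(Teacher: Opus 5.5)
Your proposal is correct and follows the paper's proof essentially verbatim. It uses the same integer-interval product $A\times B$ with $|A|\approx p^{1/2-\varepsilon/2}$ and $|B|\approx p^{1-\varepsilon/2}$ over $\F_p$, and over $\F_q$ the same set $E=\F_{\sqrt q}\times V$, where $V\supseteq\F_{\sqrt q}$ is an $\F_p$-subspace of dimension $(1-\varepsilon)n$. The only difference is minor: the paper computes $|\Delta_P(E)|=(M-1)^2+2N-1$ exactly, checking that the translates $D+k^2$ overlap and that no wrap-around occurs, whereas you note that reducing an integer interval of length $O(p^{1-\varepsilon/2})$ modulo $p$ can only shrink it, which is all the upper bound needs.
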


Theorem \ref{thm2} shows that the exponent $\frac{3}{2}$ is sharp in full generality. It is therefore natural to ask which configurations of $E$ can make $|\Delta_P(E)|$ small. The main obstruction is concentration on vertical lines: if many points of $E$ share the same first coordinate, then the parabolic distance set may be significantly smaller than expected.

To quantify this obstruction, we introduce
\begin{equation}\label{eq:def-K}
K_E:=\max_{u\in \F_q}\abs{E\cap (\{u\}\times \F_q)},
\end{equation}
the size of the largest vertical slice of $E$. For ease of exposition, we suppress the subscript and simply write $K$ in place of $K_E,$ when context is clear. Our main result gives a quantitative lower bound for $\abs{\Delta_P(E)}$ in terms of $\abs{E}$ and $K_E$.

\begin{theorem}\label{thm:main}
Let $E\subset \F_q^2$, and let $K_E$ be defined as in \eqref{eq:def-K}. Then
\begin{equation}\label{eq:main-lower-bound}
\abs{\Delta_P(E)}
\ge
\frac{q}{1+\dfrac{q^2K_E}{|E|^2}}.
\end{equation}
In particular, if $|E|\ge Cq\sqrt{K_E}$ for a sufficiently large absolute constant $C$, then $\abs{\Delta_P(E)}\ge cq$ for some absolute constant $c>0$.
\end{theorem}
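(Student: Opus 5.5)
We will use Cauchy--Schwarz on the counting function. For $t\in\F_q$ let $\nu(t):=\#\{(\mathbf{x},\mathbf{y})\in E\times E:\|\mathbf{x}-\mathbf{y}\|_P=t\}$. Since $\sum_t \nu(t)=|E|^2$ and $\nu$ is supported on $\Delta_P(E)$,
\[
|E|^4\le |\Delta_P(E)|\sum_t \nu(t)^2 .
\]
So it suffices to prove the energy bound
\[
\sum_t\nu(t)^2\le \frac{|E|^4}{q}+q|E|^2K_E,
\]
because then $|\Delta_P(E)|\ge |E|^4/(|E|^4/q+q|E|^2K_E)=q/(1+q^2K_E/|E|^2)$. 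The "in particular" clause follows immediately: if $|E|\ge Cq\sqrt{K_E}$, the denominator is at most $1+C^{-2}$.

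To bound $\sum_t\nu(t)^2$ we Fourier expand in the $t$ variable. Fix a nontrivial additive character $\chi$ and write $\nu(t)=q^{-1}\sum_{s}\sum_{\mathbf{x},\mathbf{y}\in E}\chi(s(\|\mathbf{x}-\mathbf{y}\|_P-t))$. Plancherel in $t$ gives
\[
\sum_t\nu(t)^2=\frac1q\sum_{s\in\F_q}|S(s)|^2,\qquad S(s):=\sum_{\mathbf{x},\mathbf{y}\in E}\chi\big(s(x_2-y_2)+s(x_1-y_1)^2\big).
\]
The term $s=0$ contributes exactly $|E|^4/q$. For $s\neq 0$, expand $(x_1-y_1)^2=x_1^2-2x_1y_1+y_1^2$ and set $g_s(\mathbf{x}):=E(\mathbf{x})\chi(s x_2+sx_1^2)$. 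Then $S(s)=\sum_{\mathbf{x},\mathbf{y}}g_s(\mathbf{x})\overline{g_s(\mathbf{y})}\chi(-2s x_1y_1)$.

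Now aggregate over vertical slices. Let $a_s(u):=\sum_{x_2:(u,x_2)\in E}\chi(s x_2)$, so that $S(s)=\sum_{u,v}\chi(su^2)a_s(u)\overline{\chi(sv^2)a_s(v)}\chi(-2suv)$. Put $b_s(u):=\chi(su^2)a_s(u)$. Then $S(s)=\sum_v \overline{b_s(v)}\,\widehat{b_s}(2sv)$ up to normalization, with $\widehat{b}(\xi)=\sum_u b(u)\chi(-u\xi)$. Actually we do not need this form: since $S(s)=\sum_{w}|\cdots|$ is not obviously positive, we simply bound it crudely. By Cauchy--Schwarz and Plancherel on $\F_q$ (using that $v\mapsto 2sv$ is a bijection),
\[
|S(s)|\le \|b_s\|_2\,\|\widehat{b_s}\|_2=\sqrt q\,\|a_s\|_2^2 .
\]
Hence $|S(s)|^2\le q\|a_s\|_2^4$.

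The remaining task is to show
\[
\sum_{s\ne0}\|a_s\|_2^4\le q|E|^2K_E\cdot(\text{const}).
\]
This is the step I expect to be the main obstacle, because the bound $|S(s)|^2\le q\|a_s\|_2^4$ loses the averaging over $s$. First try: use $\|a_s\|_2^2\le \sum_u |E_u|^2\le K_E|E|$, where $E_u$ is the slice at $u$. This gives $\sum_s$ only as $q\cdot K_E^2|E|^2$, which is too weak. Better: keep one factor $\|a_s\|_2^2\le K_E|E|$ and sum the other over $s$. By Plancherel in $x_2$, $\sum_s\|a_s\|_2^2=\sum_u q|E_u|=q|E|$. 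This yields
\[
\frac1q\sum_{s\ne0}|S(s)|^2\le \frac1q\cdot q\cdot K_E|E|\cdot q|E|=q|E|^2K_E,
\]
exactly the required error term. One must check that the constants work out so that no extra factor arises. If a stray factor appears, it only affects the constant in the "in particular" statement; the displayed inequality \eqref{eq:main-lower-bound} requires the constant to be exactly $1$, which the computation above delivers.
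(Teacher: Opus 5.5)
Your proof is correct. Up to the bound $|S(s)|^2\le q\|a_s\|_2^4$ it is the paper's argument: your $b_s(u)=\chi(su^2)a_s(u)$ is the paper's $f_s$ built from the sheared fibers $A_u=E_u+u^2$, and the paper's $U_s$ and $V_s$ both equal your $\|a_s\|_2^2$.

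There is one slip to fix. Expanding $s(x_1-y_1)^2$ produces $+sy_1^2$, so the $\mathbf{y}$-factor is $\chi(sv^2)\overline{a_s(v)}$, not $\overline{\chi(sv^2)a_s(v)}$. This is the paper's $g_s$, coming from the second shear $B_v=E_v-v^2$. So $S(s)$ is a bilinear form in two different functions, not a Hermitian form in $b_s$. Both functions have modulus $|a_s|$, so your Cauchy--Schwarz/Plancherel estimate $|S(s)|\le\sqrt q\,\|a_s\|_2^2$ still holds.

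The real divergence is the final step. You bound $\sum_{s\ne0}\|a_s\|_2^4\le\bigl(\max_s\|a_s\|_2^2\bigr)\sum_s\|a_s\|_2^2\le K_E|E|\cdot q|E|$, using only $|a_s(u)|\le|E_u|$ and Plancherel in the fiber variable. The paper instead applies Cauchy--Schwarz in $s$, evaluates $\sum_s U_s^2=q\sum_{x,x'}E_+(A_x,A_{x'})$ exactly, and then uses the trivial energy bound of Lemma~\ref{lem:energy-trivial}.

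Your route is shorter and needs no additive combinatorics. The paper's route gives the intermediate bound $q\bigl(\sum_x|E_x|^{3/2}\bigr)^2$, which is at most your $q|E|\sum_x|E_x|^2$ by Cauchy--Schwarz. More to the point, it keeps the identity $\sum_s\|a_s\|_2^4=q\,\mathcal{E}_{\mathrm{fib}}(E)$, which drives the refinement in Theorem~\ref{thm:fiber-energy}; your $\ell^\infty\cdot\ell^1$ step discards it. Both arguments reach exactly $q|E|^2K_E$ with constant $1$, so \eqref{eq:main-lower-bound} follows either way.
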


An immediate consequence is the following.
\begin{corollary}\label{cor:alpha}
Suppose $K_E\ll |E|^\alpha$ for some $0\le \alpha<1$. If $|E|\gg q^{\frac{2}{2-\alpha}}$, then $|\Delta_P(E)|\gg q$.
\end{corollary}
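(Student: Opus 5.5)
The corollary follows by substituting the hypothesis into Theorem \ref{thm:main}. By \eqref{eq:main-lower-bound} it is enough to show that the quantity $q^2K_E/|E|^2$ is bounded above by an absolute constant. Once that holds, say $q^2K_E/|E|^2\le M$, the theorem gives $|\Delta_P(E)|\ge q/(1+M)\gg q$.

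The first step uses the hypothesis $K_E\le C_1|E|^\alpha$, which gives
\[
\frac{q^2K_E}{|E|^2}\le C_1\,\frac{q^2}{|E|^{2-\alpha}}.
\]
The second step uses $|E|\ge C_2q^{\frac{2}{2-\alpha}}$. Since $0\le\alpha<1$, the exponent $2-\alpha$ is positive, so we may raise both sides to that power and get $|E|^{2-\alpha}\ge C_2^{2-\alpha}q^2$. Therefore
\[
\frac{q^2K_E}{|E|^2}\le C_1C_2^{-(2-\alpha)}=:M.
\]

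Substituting this into \eqref{eq:main-lower-bound} gives $|\Delta_P(E)|\ge q/(1+M)$, which is the claim. The implied constant depends only on the constants in the hypotheses and on $\alpha$, and not on $q$.

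There is essentially no obstacle here. The only points to check are that $2-\alpha>0$, so the power manipulation is legitimate, and that the constants are tracked correctly. One could also take $C_2$ large so that $M$ is small, in which case the "in particular" clause of Theorem \ref{thm:main} applies directly.
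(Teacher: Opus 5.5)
Your proof is correct and is the paper's argument: substitute $K_E\ll|E|^\alpha$ into \eqref{eq:main-lower-bound} and note that $|E|\gg q^{\frac{2}{2-\alpha}}$ forces $q^2K_E/|E|^2$ to be bounded. Your version tracks the constants more explicitly, and it correctly observes that any bound on this ratio already gives $|\Delta_P(E)|\ge q/(1+M)$, so no largeness assumption on the implied constant is needed.
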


\noindent\textbf{Organization of the paper.} The rest of this paper is organized as follows. In Section~\ref{sec:preliminaries}, we lay out our tools. To make the exposition fairly self-contained, we include a short proof of the threshold $\frac{3}{2}$ in Section~\ref{appendix-section}. We prove Theorem \ref{thm:main} in Section~\ref{sec:proof-main}. We demonstrate the general sharpness of Theorem \ref{startingpoint} by proving Theorem \ref{thm2} in Section~\ref{sec:prime-counterexample}. We then prove some natural generalizations of Theorem \ref{thm:main} in Section~\ref{extension-section}, followed by demonstrating the sharpness of Theorem \ref{thm:main} in Section~\ref{section6}.

\section{Preliminaries}\label{sec:preliminaries}

Throughout, $q$ is an odd prime power, and $\chi:\F_q\to\mathbb C^\times$
denotes a fixed nontrivial additive character. We use the orthogonality relation
\begin{equation}\label{eq:orthogonality}
\sum_{t\in \F_q}\chi(\alpha t)=
\begin{cases}
q, & \alpha=0,\\
0, & \alpha\ne 0.
\end{cases}
\end{equation}
For a function $h:\F_q\to\mathbb C$, define its unnormalized Fourier transform by
\[
\widehat{h}(\xi):=\sum_{y\in \F_q}h(y)\chi(\xi y),
\qquad \xi\in \F_q.
\]

For every $h:\F_q\to \mathbb{C}$, the Plancherel tells us that
\begin{equation}\label{lem:plancherel}
\sum_{\xi\in \F_q}\abs{\widehat{h}(\xi)}^2
=
q\sum_{y\in \F_q}\abs{h(y)}^2.
\end{equation}

For finite sets $P,Q\subset \F_q$, define their additive energy by
\[
E_+(P,Q):=
\abs{\{(x,x',y,y')\in P^2\times Q^2:\ x+y=x'+y'\}}.
\]
We record the following trivial bound on the additive energy.
\begin{lemma}\label{lem:energy-trivial}
For finite sets $P,Q\subset \F_q$,
\begin{equation*}
E_+(P,Q)
\le
|P|\,|Q|\,\min\{|P|,|Q|\}
\le
|P|^{\frac{3}{2}}|Q|^{\frac{3}{2}}.
\end{equation*}
\end{lemma}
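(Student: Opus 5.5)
The plan is to fix one variable of the additive relation and count the solutions for the remaining ones, and then pass from the first bound to the second with an elementary inequality.

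\textbf{First inequality.} Fix the pair $(x,y)\in P\times Q$. The constraint $x+y=x'+y'$ then determines $y'=x+y-x'$ once $x'$ is chosen. So there are at most $|P|$ choices of $x'$, and each completes in at most one way.

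Alternatively, once $y'\in Q$ is chosen, $x'=x+y-y'$ is determined. This gives at most $|Q|$ completions.

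Summing over the $|P|\,|Q|$ choices of $(x,y)$ therefore yields
\[
E_+(P,Q)\le |P|\,|Q|\min\{|P|,|Q|\}.
\]

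\textbf{Second inequality.} It only remains to use
\[
\min\{a,b\}\le \sqrt{ab}\qquad\text{for } a,b\ge 0,
\]
which holds because the smaller of the two numbers is at most their geometric mean. Taking $a=|P|$ and $b=|Q|$ gives
\[
|P|\,|Q|\min\{|P|,|Q|\}\le |P|\,|Q|\,|P|^{1/2}|Q|^{1/2}=|P|^{\frac32}|Q|^{\frac32}.
\]

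There is no real obstacle here. The only point requiring care is that the map from the free variable to the determined one is injective, and this holds because the map is a translation in $\F_q$.
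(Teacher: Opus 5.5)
Your proof is correct and is essentially the paper's argument. The paper writes $E_+(P,Q)=\sum_w r_{P+Q}(w)^2\le \max_w r_{P+Q}(w)\cdot\sum_w r_{P+Q}(w)$, with $\max_w r_{P+Q}(w)\le\min\{|P|,|Q|\}$ and $\sum_w r_{P+Q}(w)=|P|\,|Q|$; this is your count of at most $\min\{|P|,|Q|\}$ completions for each pair $(x,y)$, and the second inequality is handled identically via $\min\{u,v\}\le (uv)^{1/2}$.
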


\begin{proof}
Let $r_{P+Q}(w):=\abs{\{(x,y)\in P\times Q: x+y=w\}}$. Then, by Cauchy--Schwarz,
\[
E_+(P,Q)=\sum_{w\in \F_q}r_{P+Q}(w)^2
\le
\Bigl(\max_{w\in \F_q}r_{P+Q}(w)\Bigr)\sum_{w\in \F_q}r_{P+Q}(w)
\le
\min\{|P|,|Q|\}\, |P| \, |Q|,
\]
which proves the first inequality. The second follows from the elementary bound
$\min\{u,v\}\le(uv)^{\frac{1}{2}}$.
\end{proof}

\section{The exponent of $\frac{3}{2}$}\label{appendix-section}
Here, we formally state and prove the result giving the exponent of $\frac{3}{2}.$ This has been known for quite some time, but we include this here to make the exposition more self-contained.
\begin{theorem}\label{startingpoint}
    Let $E\subset \mathbb{F}_q^2$. Assume that $|E|\gg q^{\frac{3}{2}}$, then $\Delta_P(E)=\mathbb{F}_q$.
\end{theorem}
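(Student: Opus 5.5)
The plan is a direct count of the pairs realizing a prescribed parabolic distance, via the orthogonality relation \eqref{eq:orthogonality}. Fix $t\in\F_q$ and set
\[
\nu(t):=\abs{\{(\mathbf{x},\mathbf{y})\in E\times E:\ (x_2-y_2)+(x_1-y_1)^2=t\}}.
\]
Writing the indicator of the equation as $q^{-1}\sum_{s}\chi(s(\cdots-t))$ gives
\[
\nu(t)=\frac{|E|^2}{q}+\frac1q\sum_{s\neq 0}\chi(-st)\sum_{\mathbf{x},\mathbf{y}\in E}\chi\bigl(s(x_2-y_2)+s(x_1-y_1)^2\bigr).
\]
The goal is to show that the error term is at most $q^{1/2}|E|$ in absolute value. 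Then $\nu(t)>0$ as soon as $|E|>q^{3/2}$, and hence every $t\in\F_q$ lies in $\Delta_P(E)$.

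To bound the inner sum, I would expand $(x_1-y_1)^2=x_1^2-2x_1y_1+y_1^2$. For each fixed $s\ne0$ this gives
\[
S(s):=\sum_{\mathbf{x},\mathbf{y}\in E}\chi\bigl(s(x_2+x_1^2)\bigr)\,\overline{\chi\bigl(s(y_2+y_1^2)\bigr)}\,\chi(-2sx_1y_1).
\]
Define $f_s(u):=\sum_{v:(u,v)\in E}\chi(s(v+u^2))$, a function on $\F_q$ with $|f_s(u)|\le |E\cap(\{u\}\times\F_q)|$. Then
\[
S(s)=\sum_{u,u'}f_s(u)\overline{f_s(u')}\chi(-2suu'),
\]
which is a bilinear form in the matrix $(\chi(-2suu'))_{u,u'}$. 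That matrix is $q$ times a unitary matrix, so its operator norm is $\sqrt q$. Therefore
\[
|S(s)|\le \sqrt q\sum_u|f_s(u)|^2.
\]

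The bound $\sum_u|f_s(u)|^2\le\sum_u|E_u|^2$ is too weak for a single $s$. The fix is to sum over $s$ instead. Using $|S(s)|\le\sqrt q\sum_u|f_s(u)|^2$ and extending the sum to all $s\in\F_q$,
\[
\sum_{s\ne0}|S(s)|\le\sqrt q\sum_u\sum_{s\in\F_q}|f_s(u)|^2 .
\]
For each fixed $u$, the map $s\mapsto f_s(u)$ is, up to the unimodular factor $\chi(su^2)$, the Fourier transform of the indicator of the slice $E_u$. By Plancherel \eqref{lem:plancherel}, $\sum_s|f_s(u)|^2=q|E_u|$. Hence
\[
\sum_{s\ne0}|S(s)|\le q^{3/2}|E|.
\]
Dividing by $q$, the error term in $\nu(t)$ is at most $q^{1/2}|E|$, exactly as required.

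I expect the main obstacle to be the step just described: recognizing that the per-$s$ bound must be averaged over $s$ before Plancherel is applied to the vertical slices, because bounding each $s$ separately loses a factor of $K_E$. The rest is bookkeeping. The hypothesis $|E|\gg q^{3/2}$ should be read as $|E|>q^{3/2}$, or as $|E|\ge Cq^{3/2}$ with $C\ge1$, so that $|E|^2/q>q^{1/2}|E|$.
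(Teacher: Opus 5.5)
Your argument is correct and gives exactly the paper's estimate $\bigl|\nu(t)-|E|^2/q\bigr|\le q^{1/2}|E|$, but by a different route. For this theorem the paper computes no character sums. It recasts $\|\mathbf{x}-\mathbf{y}\|_P=t$ as the incidence of the point $(x_1,x_2+x_1^2)$ with the line $v=2y_1u+(y_2-y_1^2+t)$, checks that the point map and the line map are injective on $E$, and then quotes Vinh's point--line bound (Theorem~\ref{thm:Vinh-point-line}), whose proof is spectral. You instead prove the needed incidence estimate directly from \eqref{eq:orthogonality} and \eqref{lem:plancherel}: the operator-norm bound for $(\chi(-2suu'))_{u,u'}$ handles each frequency, and Plancherel on each vertical slice, applied after summing over $s$, gives $\sum_{s}\sum_u|f_s(u)|^2=q|E|$.

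The paper's route is shorter and makes the geometry visible. Yours is self-contained and exposes the link with Section~\ref{sec:proof-main}. Your per-frequency bound is exactly the paper's $|S(s)|^2\le qU_sV_s$. The two theorems then differ only in how one sums over $s$. Summing $|S(s)|$ needs only Plancherel to control $\sum_s U_s$, which gives an error bound uniform in $t$ and hence $\Delta_P(E)=\F_q$. Summing $|S(s)|^2$ brings in $\sum_s U_s^2$, hence fiber energies and $K_E$, and yields only a count of distinct distances.

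There are two small slips, neither affecting the conclusion. First, $(x_2-y_2)+(x_1-y_1)^2=(x_2+x_1^2)-(y_2-y_1^2)-2x_1y_1$. So the second factor in $S(s)$ is $g_s(u')=\sum_{v'\in E_{u'}}\chi(-s(v'-u'^2))=\chi(2su'^2)\overline{f_s(u')}$, not $\overline{f_s(u')}$. Since $|g_s|=|f_s|$ pointwise, the bound $|S(s)|\le\sqrt q\,\|f_s\|_2\|g_s\|_2=\sqrt q\sum_u|f_s(u)|^2$ is unchanged. Second, $(\chi(-2suu'))_{u,u'}$ is $\sqrt q$ times a unitary matrix, not $q$ times; the former is what your operator-norm claim actually uses. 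Finally, strict positivity of $\nu(t)$ needs $|E|>q^{3/2}$, i.e. $C>1$ rather than $C\ge 1$.
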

We give a short proof of Theorem \ref{startingpoint} as a corollary of the following result due to the fourth listed author.
\begin{theorem}[Vinh {\cite{LAV11}}]\label{thm:Vinh-point-line}
Let $P$ be a set of points and let $L$ be a set of lines in $\F_q^2$. Denote by
\[
I(P,L):=\bigl|\{(p,\ell)\in P\times L:\ p\in \ell\}\bigr|
\]
the number of incidences between $P$ and $L$. Then
\[
\left|I(P,L)-\frac{|P| |L|}{q}\right|
\le q^{\frac{1}{2}}|P|^{\frac{1}{2}}|L|^{\frac{1}{2}}.
\]
\end{theorem}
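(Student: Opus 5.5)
The plan is a direct second-moment (variance) argument over all lines of $\F_q^2$, followed by Cauchy--Schwarz; no Fourier analysis is needed. Let $\mathcal{L}$ denote the set of all lines in $\F_q^2$, namely the non-vertical lines $y=mx+b$ and the vertical lines $x=c$, so $|\mathcal{L}|=q^2+q$. First I rewrite the quantity to be bounded as
\[
I(P,L)-\frac{|P|\,|L|}{q}=\sum_{\ell\in L}\Bigl(|P\cap \ell|-\frac{|P|}{q}\Bigr).
\]
Cauchy--Schwarz, after enlarging the sum of squares from $L$ to $\mathcal{L}$, gives
\[
\left|I(P,L)-\frac{|P|\,|L|}{q}\right|\le |L|^{\frac12}\Bigl(\sum_{\ell\in\mathcal{L}}\Bigl(|P\cap\ell|-\frac{|P|}{q}\Bigr)^2\Bigr)^{\frac12}.
\]
It therefore suffices to show that the variance sum over all lines is at most $q|P|$.

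To do this I compute the first and second moments exactly, using only two incidence facts of the affine plane:
\begin{itemize}
\item every point lies on exactly $q+1$ lines;
\item two distinct points lie on exactly one common line.
\end{itemize}
The first fact gives $\sum_{\ell\in\mathcal{L}}|P\cap\ell|=(q+1)|P|$. Expanding $|P\cap\ell|^2$ as a count of ordered pairs $(p,p')\in P^2$ on $\ell$, the two facts together give
\[
\sum_{\ell\in\mathcal{L}}|P\cap\ell|^2=(q+1)|P|+|P|(|P|-1)=q|P|+|P|^2 .
\]

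Expanding the square then yields
\[
\sum_{\ell\in\mathcal{L}}\Bigl(|P\cap\ell|-\frac{|P|}{q}\Bigr)^2
= q|P|+|P|^2-\frac{2|P|^2(q+1)}{q}+\frac{(q^2+q)|P|^2}{q^2}
= q|P|-\frac{|P|^2}{q}\le q|P|.
\]
Substituting this into the Cauchy--Schwarz bound gives exactly $q^{1/2}|P|^{1/2}|L|^{1/2}$, as claimed.

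The only step requiring care is the bookkeeping in this last identity. The vertical lines must be included so that every point lies on exactly $q+1$ lines, and the centring constant must be $|P|/q$ rather than the exact mean $(q+1)|P|/(q^2+q)$; these coincide, so the cross terms cancel cleanly. There is no genuine obstacle beyond this.
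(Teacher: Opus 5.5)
Your proof is correct. Enlarging the sum of squares from $L$ to all $q^2+q$ lines is legitimate because the added terms are nonnegative. The two moment identities use exactly the affine-plane incidence axioms, and the variance does equal $q|P|-|P|^2/q$.

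The paper gives no proof of this statement; it imports it from Vinh's work. There, as far as I recall, the argument is spectral: the nontrivial eigenvalues of the point--line incidence graph have modulus at most $\sqrt{q}$, and the expander mixing lemma is applied to the pair $(P,L)$. Your route is essentially that mechanism made explicit and one-sided. The identity $\sum_{\ell}|P\cap\ell|^2=q|P|+|P|^2$ (over all lines) is the relation $MM^{T}=qI+J$ for the point--line incidence matrix $M$ of $\F_q^2$, evaluated at the indicator of $P$. Cauchy--Schwarz on the line side then replaces the eigenvalue bound.

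What you gain is a short, fully elementary proof using nothing beyond the two incidence axioms. It even gives the slightly sharper bound $\sqrt{q|P||L|(1-|P|/q^2)}$, and it adapts verbatim to any $2$-design. What the spectral approach buys is generality and symmetry in the two vertex classes. The mixing lemma applies to any bipartite graph with a known spectral gap, including graphs such as finite-field distance or dot-product graphs. In those, two vertices need not share a constant number of neighbours, so no Gram identity of the form $aI+bJ$ is available.
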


We now prove Theorem \ref{startingpoint}.
\begin{proof}[Proof of Theorem \ref{startingpoint}]
For each $t\in \F_q$, let
\[
\nu(t):=\bigl|\{(\mathbf{x}, \mathbf{y})\in E\times E:\ \|\mathbf{x}-\mathbf{y}\|_{P}=t\}\bigr|.
\]
It suffices to show that $\nu(t)>0$ for every $t\in \F_q$.

Fix $t\in \F_q$. Write $\mathbf{x}=(x_1,x_2), \mathbf{y}=(y_1,y_2)$. 
By definition $\|\mathbf{x}-\mathbf{y}\|_{P}=(x_2-y_2)+(x_1-y_1)^2$.
Therefore, the relation $\|\mathbf{x}-\mathbf{y}\|_{P}=t$ is equivalent to
\[
x_2-y_2+(x_1-y_1)^2=t.
\]
Expanding the square and rearranging terms, we obtain
\[
x_2+x_1^2=2y_1x_1+(y_2-y_1^2+t).
\]
This suggests the following incidence model. Define the point set
\[
P_t:=\{(x_1,x_2+x_1^2):\ \mathbf{x}=(x_1,x_2)\in E\}\subset \F_q^2,
\]
and define the line set
\[
L_t:=\bigl\{\ell_{\mathbf{y},t}:\ \mathbf{y}=(y_1,y_2)\in E\bigr\},
\]
where
\[
\ell_{\mathbf{y},t}:=\{(u,v)\in \F_q^2:\ v=2y_1u+(y_2-y_1^2+t)\}.
\]
Then for $\mathbf{x}\in E$ and $\mathbf{y}\in E$, the condition $\|\mathbf{x}-\mathbf{y}\|_{P}=t$ holds if and only if $(x_1,x_2+x_1^2)\in \ell_{\mathbf{y},t}$.
Consequently,
\[
\nu(t)=I(P_t,L_t).
\]
Since the map $(x_1,x_2)\mapsto (x_1,x_2+x_1^2)$
is injective, we have $|P_t|=|E|$.
Likewise, for fixed $t$, the map $(y_1,y_2)\mapsto \ell_{\mathbf{y},t}$
is injective, because the slope $2y_1$ determines $y_1$, and then the intercept
$y_2-y_1^2+t$ determines $y_2$. 
Hence
$|L_t|=|E|$.

Applying Theorem \ref{thm:Vinh-point-line}, we conclude that
\[
\left|\nu(t)-\frac{|E|^2}{q}\right|
=
\left|I(P_t,L_t)-\frac{|P_t| |L_t|}{q}\right|
\le q^{\frac{1}{2}}|P_t|^{\frac{1}{2}}|L_t|^{\frac{1}{2}}
= q^{\frac{1}{2}}|E|.
\]
Thus
\[
\nu(t)\ge \frac{|E|^2}{q}-q^{\frac{1}{2}}|E|.
\]
In particular, if $|E|>q^{\frac{3}{2}}$, then $\nu(t)>0$ for every $t\in \F_q$. Therefore, every $t\in \F_q$ is attained as a parabolic distance determined by $E$, and so
\[
\Delta_P(E)=\F_q.
\]
This completes the proof.
\end{proof}

\section{Proof of Theorem \ref{thm2}}\label{sec:prime-counterexample}
Theorem \ref{thm2} follows from the following two propositions. The proof of the first one is similar to a construction due to the first listed author and Iosevich, from \cite{IS14}, though the goal there was to show that one parabolic distance repeated many times there was no analysis of distinct parabolic distances. That result was directly motivated by a construction due to Valtr in \cite{V05}.

 \begin{proposition}\label{prop:prime-counterexample}
Let $p$ be a sufficiently large odd prime and let $\varepsilon \in (0,1)$.
Then there exists a set $E\subset \F_p^2$ such that
\[
|E| \sim p^{\frac{3}{2}-\varepsilon}
\qquad\text{and}\qquad
|\Delta_P(E)|=o(p).
\]
\end{proposition}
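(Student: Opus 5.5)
We build a Valtr-type grid adapted to the parabola, in the spirit of \cite{IS14}. The parabolic distance $\|\mathbf{x}-\mathbf{y}\|_P=(x_2-y_2)+(x_1-y_1)^2$ is a sum of a difference of second coordinates and a square of a difference of first coordinates. We want both pieces to lie in small ranges when viewed as integers in $[0,p)$, so that their sum cannot wrap around modulo $p$.

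Identify $\F_p$ with $\{0,1,\dots,p-1\}$ and fix parameters $0<a$, $0<b$ to be tuned. Take
\[
E=\{(x_1,x_2): 0\le x_1< p^{a},\ 0\le x_2<p^{b}\},
\]
so $|E|\sim p^{a+b}$. For $\mathbf{x},\mathbf{y}\in E$, the integer $x_1-y_1$ has absolute value below $p^a$, so $(x_1-y_1)^2$ is an integer in $[0,p^{2a})$. Likewise $x_2-y_2$ is an integer in $(-p^b,p^b)$. The integer value of $(x_2-y_2)+(x_1-y_1)^2$ therefore lies in an interval of length at most $2p^b+p^{2a}$. Hence
\[
|\Delta_P(E)|\le 2p^{b}+p^{2a}+1 .
\]
If we choose $2a<1$ and $b<1$, this is $o(p)$.

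Now we tune the exponents. We need $a+b=\tfrac32-\varepsilon$ with $a<\tfrac12$ and $b<1$. Take $a=\tfrac12-\tfrac{\varepsilon}{2}$ and $b=1-\tfrac{\varepsilon}{2}$. Then $a+b=\tfrac32-\varepsilon$, and $|\Delta_P(E)|\ll p^{1-\varepsilon}+p^{1-\varepsilon/2}=o(p)$. Since $\varepsilon\in(0,1)$, both exponents are positive and $p^a, p^b\le p$, so the set is well defined inside $\F_p^2$. Replacing $p^a$ by $\lfloor p^a\rfloor$ changes $|E|$ only by a constant factor once $p$ is large, which is why the statement reads $|E|\sim p^{\frac32-\varepsilon}$.

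The only delicate point is the reduction modulo $p$. Distinct residues of the parabolic distance are bounded by distinct integer values of the unreduced expression, so counting integers in an interval gives a valid upper bound. This uses crucially that $\F_p$ is a prime field and carries the ordered integer structure. This is exactly why the arbitrary-$q$ case in Theorem \ref{thm2} needs a separate construction, presumably via subfields, which also explains the restriction to rational $\varepsilon$ there.
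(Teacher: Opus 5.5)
Your proposal is correct and is essentially the paper's construction. You use the same grid $\{0,\dots,\lfloor p^{1/2-\varepsilon/2}\rfloor-1\}\times\{0,\dots,\lfloor p^{1-\varepsilon/2}\rfloor-1\}$ (up to rounding), and the paper differs only in computing $|\Delta_P(E)|=(M-1)^2+2N-1$ exactly, via overlapping translates $D+k^2$ and a no-wrap-around check modulo $p$, whereas your upper bound by counting integers in the interval of unreduced values is all that is needed.
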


\begin{proof}
Let $p$ be an odd prime and fix $\varepsilon\in(0,1)$.
Define
\[
M:=\big\lfloor p^{\frac12-\frac{\varepsilon}{2}}\big\rfloor,
\quad
N:=\big\lfloor p^{1-\frac{\varepsilon}{2}}\big\rfloor,
\quad
A:=\{0,1,\dots,M-1\}\subset \F_p,
\quad
B:=\{0,1,\dots,N-1\}\subset \F_p,
\]
and $E:=A\times B\subset \F_p^2$. Below we show that for all sufficiently large primes $p$ (depending on $\varepsilon)$,
\begin{align*}
    |E| = MN \sim p^{\frac{3}{2}-\varepsilon}
\qquad\text{and}\qquad
|\Delta_P(E)| = (M-1)^2+2N-1 \sim p^{1-\frac{\varepsilon}{2}} = o(p).
\end{align*}

Write points as $\mathbf{x}=(a,b)$ and $\mathbf{y}=(a',b')$ with $a,a'\in A$ and $b,b'\in B$. Then
\[
\|\mathbf{x}-\mathbf{y}\|_P=(b-b')+(a-a')^2.
\]
Let
\[
D:=B-B=\{-(N-1),\dots,N-1\}\subset \F_p,
\qquad |D|=2N-1,
\]
and
\[
Q:=\{(a-a')^2:a,a'\in A\}\subset \F_p.
\]
Since $M=\big\lfloor p^{\frac12-\frac{\varepsilon}{2}}\big\rfloor<p^{\frac{1}{2}}$ for all large $p$, the residues
$0^2,1^2,\dots,(M-1)^2$ are distinct modulo $p$, and every difference $a-a'$ is congruent to an
integer in $\{-(M-1),\dots,M-1\}$. Hence
\[
Q=\{0^2,1^2,\dots,(M-1)^2\}.
\]
Therefore
\[
\Delta_P(E)=D+Q=\bigcup_{k=0}^{M-1}(D+k^2).
\]
We next show the union is a single interval in $\mathbb{Z}$, and that no modular wrap-around occurs.
By definition of $M$ and $N$ we have $N\ge M$ for all large $p$, so consecutive squares satisfy
\[
(k+1)^2-k^2=2k+1\le 2M-1\le 2N-1.
\]
Therefore the intervals $D+k^2$ and $D+(k+1)^2$ overlap for each
$0\le k\le M-2$, and hence
$\bigcup\limits_{k=0}^{M-1}(D+k^2)$
is an interval with endpoints
\[
\min(D+0)=-(N-1),
\qquad
\max(D+(M-1)^2)=N-1+(M-1)^2.
\]
Consequently its cardinality equals
\[
\bigl(N-1+(M-1)^2\bigr)-\bigl(-(N-1)\bigr)+1=(M-1)^2+2N-1.
\]
Moreover,
\begin{align}\label{upper-bound-M-N}
    (M-1)^2+2N-1 \;\le\; M^2+2N \;\ll\; p^{1-\varepsilon}+p^{1-\frac{\varepsilon}{2}}=o(p),
\end{align}
so for all sufficiently large $p$ we have $M^2+2N<p$, and reduction modulo $p$ is injective on this interval. Therefore the same cardinality holds in $\F_p$, proving
\begin{align}\label{size-Delta-E}
    |\Delta_P(E)|=(M-1)^2+2N-1.
\end{align}
By construction,
\[
|E|=|A|\cdot |B|=MN \sim p^{(\frac12-\frac{\varepsilon}{2})+(1-\frac{\varepsilon}{2})}
= p^{\frac{3}{2}-\varepsilon},
\]
while \eqref{size-Delta-E} together with \eqref{upper-bound-M-N} implies that
\[
|\Delta_P(E)| \ll\; p^{1-\varepsilon}+p^{1-\frac{\varepsilon}{2}}=o(p).
\]
This completes the proof of the proposition.
\end{proof}

    One could generalize the above construction for $q$ of the form $p^\ell,$ where $p$ is an odd prime, but there is some additional bookkeeping. Let $\alpha$ be a generator for the multiplicative group of $\mathbb F_q.$ Because $\mathbb F_q$ is a finite simple field extension of $\mathbb F_p,$ the set $\{1, \alpha, \alpha^2, \dots, \alpha^{\ell-1}\}$ forms a polynomial basis for $\mathbb F_q$ considered as a vector space over $\mathbb F_p.$ We now follow a procedure similar to that in the proof of Proposition \ref{prop:prime-power-counterexample} in each dimension of the vector space, but in order to avoid the modular wrap-around, we would need to take smaller and smaller proportions of each copy of $\mathbb F_p$ for larger choices of $\ell$, and the method would fail if $\ell$ is sufficiently large with respect to $p$. To avoid this pitfall, we use a different approach to prove the next result.

\begin{proposition}\label{prop:prime-power-counterexample}
Let $q$ be a sufficiently large odd prime power and let $\varepsilon\in(0,\tfrac12)$ be a rational number.
Then there exists a set $E\subset \F_q^2$ such that
\[
|E| = q^{\frac{3}{2}-\varepsilon}
\qquad\text{and}\qquad
|\Delta_P(E)|=o(q).
\]
\end{proposition}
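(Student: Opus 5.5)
One natural construction, and the one I will attempt, is a product set $E\subset A\times H$. Here $H\subset \F_q$ is an $\F_p$-subspace of small codimension, and $A$ is an $\F_p$-subspace $W\subset\F_q$ chosen so that every squared difference $(a-a')^2$ with $a,a'\in W$ already lies in $H$. Because $H-H=H$, this gives
\[
\Delta_P(A\times H)=(H-H)+\{(a-a')^2: a,a'\in W\}\subseteq H ,
\]
so $|\Delta_P(E)|\le |H|=q\,p^{-c}$ when $H$ has codimension $c$. This avoids the ``wrap-around'' issue of Proposition~\ref{prop:prime-counterexample}, since everything is exactly closed under the field operations used. It remains to make $|\Delta_P(E)|=o(q)$ and $|E|=|W|\,|H|$ as large as $q^{\frac32-\varepsilon}$.

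\emph{Step 1 (the isotropy condition).} Write $q=p^\ell$ and $H=\{y:\operatorname{Tr}(\theta_j y)=0,\ j=1,\dots,c\}$, where $\theta_1,\dots,\theta_c$ are $\F_p$-independent and $\operatorname{Tr}$ is the trace to $\F_p$. Since $W$ is a subspace, $a-a'$ ranges over $W$. So the condition becomes that $W$ is a common totally isotropic subspace of the $\F_p$-quadratic forms $Q_j(w)=\operatorname{Tr}(\theta_j w^2)$ on $\F_q\cong\F_p^\ell$. Each $Q_j$ is nondegenerate, since its polar form is $(u,v)\mapsto 2\operatorname{Tr}(\theta_j uv)$ and $p$ is odd. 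By the Chevalley--Warning theorem and the standard theory of quadratic forms over finite fields, each $Q_j$ has isotropic subspaces of dimension $\lfloor(\ell-1)/2\rfloor$.

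\emph{Step 2 (several forms at once).} For $c$ forms, I would build $W$ greedily. Given an isotropic $W_0$ for $Q_1,\dots,Q_c$, take a vector $w\in W_0^{\perp_1}\cap\cdots\cap W_0^{\perp_c}$ with $Q_j(w)=0$ for all $j$. The intersection of the orthogonal spaces has codimension at most $c\dim W_0$. A system of $c$ quadrics in more than $2c$ variables has a nontrivial zero by Chevalley--Warning. This should give $\dim W\approx \ell/(c+1)$. \textbf{This is the main obstacle.} With $|W|\approx q^{1/(c+1)}$ and $|H|=q/p^{c}$, the exponent $\frac32$ is only approached if $c=1$, i.e.\ $|H|=q/p$. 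Then $|\Delta_P(E)|\le q/p=o(q)$ only as $p\to\infty$.

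To handle all large $q$, including $p$ fixed and $\ell\to\infty$, I would instead take $H$ of codimension $c$ with $p^c\to\infty$ slowly. I would aim for an isotropic $W$ of dimension $\ell/2-O(c)$, choosing the $\theta_j$ cleverly rather than generically. The first thing I would try is taking all $\theta_j$ in a proper subfield, or using the subspace $W=\{w:\ w^{p^{\ell/2}}=-w\}$ when $\ell$ is even, on which $w^2$ lies in $\F_{p^{\ell/2}}$. Then $H\supseteq \F_{p^{\ell/2}}$ suffices, and one can take $H$ any subspace of codimension $c$ containing that subfield.

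\emph{Step 3 (exact size).} Finally, I would pass to a subset $E\subseteq W\times H$ of exact cardinality $q^{\frac32-\varepsilon}$. This requires $q^{\frac32-\varepsilon}$ to be an integer, which is where rationality of $\varepsilon$ and ``sufficiently large'' enter, possibly by restricting to $\ell$ divisible by the denominator of $\varepsilon$, or by interpreting the size up to rounding. Passing to a subset only shrinks $\Delta_P$, and $\varepsilon>0$ gives room since $|W||H|\ge q^{\frac32-o(1)}$. The case of small $\ell$ with $p$ large would fall back on the Step~2 construction with $c=1$.
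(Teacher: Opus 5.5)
Stripped of the exploratory material, the construction that actually works in your proposal is the paper's. For $q=p^{\ell}$ with $\ell$ even, you put the first coordinate in a set of size $q^{1/2}$ whose squared differences lie in $\F_{p^{\ell/2}}$. You put the second coordinate in an $\F_p$-subspace $H\supseteq\F_{p^{\ell/2}}$ of codimension $c$, so that $\Delta_P(E)\subseteq H$. The paper uses the subfield $\F_{p^{\ell/2}}$ itself as the first-coordinate set; this is simpler than your $-1$-eigenspace of $w\mapsto w^{p^{\ell/2}}$, though both work. It then takes $c=\varepsilon\ell$ outright, which gives $|E|=q^{\frac32-\varepsilon}$ exactly with no need to pass to a subset, and $|\Delta_P(E)|\le q^{1-\varepsilon}$. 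The constraint $c\le \ell/2$ is precisely where $\varepsilon<\frac12$ is used.

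The Chevalley--Warning and generic-$\theta_j$ material is unnecessary. Your attempt to handle \emph{all} large $q$ goes beyond what the paper proves (it only treats $q=p^{2m}$ with $2m\varepsilon\in\mathbb{Z}$), and it does not succeed. Some restriction on $q$ is forced anyway: $|E|=q^{\frac32-\varepsilon}$ requires $(\frac32-\varepsilon)\ell\in\mathbb{Z}$, which is impossible for $\ell=1,2$.

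Concretely, your fallback ``small $\ell$, large $p$, Step~2 with $c=1$'' is false. $Q_1$ is nondegenerate, so a totally isotropic $W$ has dimension at most $\ell/2$. Hence $|W|\,|H|\le p^{\ell/2}\cdot p^{\ell-1}=q^{\frac32-\frac1\ell}$, which is smaller than $q^{\frac32-\varepsilon}$ whenever $\ell<1/\varepsilon$. Likewise, for odd $\ell$ with $p$ fixed you give no construction with $|H|=o(q)$; choosing the $\theta_j$ in a subfield gives nothing when $\ell$ is prime.

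If you want odd $\ell$, quadratic forms are the wrong tool. In the model $\F_q\cong\F_p[x]/(f)$, take first coordinates of degree $<d$ and second coordinates of degree $<D$ with $2d-1\le D<\ell$. This avoids any reduction modulo $f$ and is the natural analogue of the interval construction over prime fields. As written, though, you should state and prove the result for $\ell$ even with $\varepsilon\ell\in\mathbb{Z}$, exactly as the paper does.
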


\begin{proof}
Let $q=p^{2m}$ with $p$ odd, where $m$ is a rational number chosen so that $(1-\varepsilon)2m\in\mathbb{N}$.
Such a choice is possible by taking $m$ to be a multiple of the denominator of $1-\varepsilon$.
Let
\[
H:=\F_{p^{m}}\subset \F_{p^{2m}}=\F_q,
\qquad |H|=p^m=q^{\frac{1}{2}}.
\]
Viewing $\F_q$ as a $2m$-dimensional vector space over $\F_p$, the set $H$ is an $\F_p$-subspace of dimension $m$.
Choose an $\F_p$-subspace $V\subset\F_q$ of dimension $(1-\varepsilon)2m$
containing $H$. Such a choice is possible since $m\le (1-\varepsilon)2m$ by assumption on $\varepsilon$.  
Then $|V| = p^{(1-\varepsilon)\cdot 2m} = q^{1-\varepsilon}$.

Define $E:=H\times V \subset \F_q^2$.
Thus 
\[
|E| = |H| \, |V| = q^{\frac{1}{2}}\cdot q^{1-\varepsilon} = q^{\frac{3}{2}-\varepsilon}. 
\]
We claim that $\Delta_P(E)\subset V$.
Indeed, take $\mathbf{x}=(h,u)$ and $\mathbf{y}=(h',u')$ in $E$, so $h,h'\in H$ and $u,u'\in V$.
Since $V$ is an additive subgroup of $\F_q$, we have $u-u'\in V$.
Also $h-h'\in H$, hence $(h-h')^2\in H\subset V$.
Therefore
\[
\|\mathbf{x}-\mathbf{y}\|_P
=(u-u')+(h-h')^2
\in V,
\]
which proves $\Delta_P(E)\subset V$.
Consequently
\[
|\Delta_P(E)|\le |V|=q^{1-\varepsilon}=o(q).
\]
The proof of the proposition is complete.
\end{proof}

\section{Proof of Theorem \ref{thm:main}}\label{sec:proof-main}

Define the distance counting function $\nu:\F_q\to\mathbb N$ by
\begin{equation*}
\nu(t):=\abs{\{(\mathbf{x}, \mathbf{y})\in E^2:\ \|\mathbf{x}-\mathbf{y}\|_{P}=t\}},
\qquad t\in\F_q.
\end{equation*}
Then $\sum\limits_{t\in \F_q}\nu(t)=|E|^2$. By the Cauchy--Schwarz inequality, one has
\[
|E|^4
=
\left(\sum_{t\in \F_q}\nu(t)\right)^2
\le
\abs{\{t\in \F_q:\nu(t)\ne 0\}}
\cdot
\sum_{t\in \F_q}\nu(t)^2
=
\abs{\Delta_P(E)}\sum_{t\in \F_q}\nu(t)^2.
\]
This implies 
\begin{equation}\label{eq111}
\abs{\Delta_P(E)}\ \ge\ \frac{n^4}{\sum\limits_{t\in\F_q}\nu(t)^2}.
\end{equation}
Therefore, in the rest, we focus on bounding $\sum\limits_{t\in \F_q}\nu(t)^2$ from above.

\textbf{The shear transformations.} We now introduce a pair of transformations and some of their properties. Recall that since $q$ is odd, $2$ is invertible in $\F_q$. Now, for any set $E\subseteq \mathbb F_q^2,$ define
\begin{equation}\label{eq:def-shears}
A:=\{(x_1,x_2+x_1^2):\mathbf{x}=(x_1,x_2)\in E\},
\qquad
B:=\{(y_1,y_2-y_1^2):\mathbf{y}=(y_1,y_2)\in E\}.
\end{equation}
Each of the maps $\mathbf{x}\mapsto(x_1,x_2+x_1^2)$ and $\mathbf{y}\mapsto(y_1,y_2-y_1^2)$ is a bijection on $\F_q^2$, so
\begin{equation*}
|A|=|B|=|E|.
\end{equation*}
First we relate the shear transformation to our counting function.
\begin{lemma}\label{lem:shear-identity}
For $\mathbf{x},\mathbf{y}\in E$, let $\mathbf{a}\in A$ and $\mathbf{b}\in B$ be the corresponding points under \eqref{eq:def-shears}. Then
\[
(x_2-y_2)+(x_1-y_1)^2=a_2-b_2-2a_1b_1.
\]
Consequently,
\[
\nu(t)=\abs{\{(\mathbf{a},\mathbf{b})\in A\times B:\ a_2-b_2-2a_1b_1=t\}}.
\]
\end{lemma}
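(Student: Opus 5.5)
This is a direct algebraic verification followed by a change of variables in the count. Write $\mathbf{a}=(a_1,a_2)=(x_1,x_2+x_1^2)$ and $\mathbf{b}=(b_1,b_2)=(y_1,y_2-y_1^2)$. Then $x_1=a_1$, $y_1=b_1$, $x_2=a_2-a_1^2$ and $y_2=b_2+b_1^2$.

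For the identity, substitute these expressions into the left-hand side:
\[
(x_2-y_2)+(x_1-y_1)^2=(a_2-a_1^2)-(b_2+b_1^2)+a_1^2-2a_1b_1+b_1^2 .
\]
The terms $a_1^2$ and $b_1^2$ cancel, leaving $a_2-b_2-2a_1b_1$, as claimed.

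For the consequence, use the maps $\phi(\mathbf{x})=(x_1,x_2+x_1^2)$ and $\psi(\mathbf{y})=(y_1,y_2-y_1^2)$. Each is a bijection of $\F_q^2$, with inverses $(u,v)\mapsto(u,v-u^2)$ and $(u,v)\mapsto(u,v+u^2)$. By the definition \eqref{eq:def-shears}, their restrictions give bijections $E\to A$ and $E\to B$. Hence $(\mathbf{x},\mathbf{y})\mapsto(\phi(\mathbf{x}),\psi(\mathbf{y}))$ is a bijection $E\times E\to A\times B$.

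By the identity just proved, this bijection carries the set of pairs with $\|\mathbf{x}-\mathbf{y}\|_P=t$ exactly onto the set of pairs $(\mathbf{a},\mathbf{b})\in A\times B$ with $a_2-b_2-2a_1b_1=t$. Counting both sets gives the stated formula for $\nu(t)$.

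There is no real obstacle here. The only care needed is to note that the correspondence between $E^2$ and $A\times B$ is a bijection, so that no pair is counted twice.
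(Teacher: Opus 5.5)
Your proof is correct and matches the paper's: the paper expands $(x_2-y_2)+(x_1-y_1)^2$ forward and regroups it as $(x_2+x_1^2)-(y_2-y_1^2)-2x_1y_1$, while you substitute the inverse shear, which is the same computation. Your explicit use of the bijection $E\times E\to A\times B$ for the counting statement is the justification the paper leaves implicit from its remark that both shears are bijections.
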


\begin{proof}
Expanding directly,
\[
(x_2-y_2)+(x_1-y_1)^2
=
x_2-y_2+x_1^2-2x_1y_1+y_1^2
=
(x_2+x_1^2)-(y_2-y_1^2)-2x_1y_1
=
a_2-b_2-2a_1b_1. \qedhere
\]
\end{proof}

It is clear by definition that the shear transformation preserves vertical fibers. The following is a particularly useful consequence of this fact.

\begin{lemma}\label{lem:K-preserved}
For every $u\in \F_q$,
\[
\abs{A\cap(\{u\}\times \F_q)}=\abs{B\cap(\{u\}\times \F_q)}=\abs{E\cap(\{u\}\times \F_q)}.
\]
In particular, $\max\limits_{u\in \F_q}\abs{A\cap(\{u\}\times\F_q)}=\max\limits_{u\in \F_q}\abs{B\cap(\{u\}\times\F_q)}=K_E$.
\end{lemma}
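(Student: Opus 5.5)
The plan is to observe that both shear maps in \eqref{eq:def-shears} fix the first coordinate. Then the vertical fibers of $A$ and $B$ over any $u\in\F_q$ are bijective images of the corresponding fiber of $E$.

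Concretely, fix $u\in\F_q$ and consider the map $\phi_A:(x_1,x_2)\mapsto(x_1,x_2+x_1^2)$ restricted to $E\cap(\{u\}\times\F_q)$.
\begin{itemize}
\item A point $(u,x_2)\in E$ is sent to $(u,x_2+u^2)$, which lies in $A\cap(\{u\}\times\F_q)$. So $\phi_A$ maps the fiber of $E$ over $u$ into the fiber of $A$ over $u$.
\item Conversely, any point of $A$ with first coordinate $u$ has the form $(x_1,x_2+x_1^2)$ for some $\mathbf{x}\in E$ with $x_1=u$. So the map is onto the fiber of $A$.
\item On this fiber it is the translation $x_2\mapsto x_2+u^2$ in the second coordinate, hence injective.
\end{itemize}
Therefore $\abs{A\cap(\{u\}\times\F_q)}=\abs{E\cap(\{u\}\times\F_q)}$. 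The same argument, with the translation $y_2\mapsto y_2-u^2$, gives $\abs{B\cap(\{u\}\times\F_q)}=\abs{E\cap(\{u\}\times\F_q)}$.

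The ``in particular'' statement follows by taking the maximum over $u\in\F_q$ of these three equal quantities and recalling the definition \eqref{eq:def-K} of $K_E$. None of these steps is a genuine obstacle. The only point needing care is surjectivity onto the fiber of $A$ (or $B$), and that is immediate because every element of $A$ and $B$ arises from a point of $E$ with the same first coordinate.
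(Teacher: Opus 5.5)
Your proof is correct and takes the same approach as the paper. Both shear maps fix the first coordinate and act on each vertical line $\{u\}\times\F_q$ as translation by $\pm u^2$ in the second coordinate, so they restrict to bijections between the fiber of $E$ and the fibers of $A$ and $B$; your explicit into/onto/injective check just spells out the paper's one-line argument.
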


\begin{proof}
For fixed $u$, both maps in \eqref{eq:def-shears} restrict to bijections on the vertical line $\{u\}\times\F_q$, so they preserve fiber sizes.
\end{proof}

The heart of the argument is the following second-moment bound.

\begin{proposition}\label{prop:second-moment}
With $n:=|E|$ and $K_E$ as in \eqref{eq:def-K},
\begin{equation}\label{eq:second-moment-bound}
\sum_{t\in \F_q}\nu(t)^2
\le
\frac{n^4}{q}+qn^2K_E.
\end{equation}
\end{proposition}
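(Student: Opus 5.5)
We would prove Proposition \ref{prop:second-moment} by expanding $\nu$ in additive characters, using the representation of Lemma \ref{lem:shear-identity}, and then reducing the nonzero-frequency part to additive energies of vertical fibers, which Lemma \ref{lem:energy-trivial} and Lemma \ref{lem:K-preserved} control.

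\textbf{Step 1: Fourier expansion of $\nu$.} By orthogonality \eqref{eq:orthogonality},
\[
\nu(t)=\frac1q\sum_{s\in\F_q}\sum_{\mathbf a\in A,\mathbf b\in B}\chi\bigl(s(a_2-b_2-2a_1b_1-t)\bigr).
\]
Then Plancherel \eqref{lem:plancherel} gives
\[
\sum_t\nu(t)^2=\frac1q\sum_s\Bigl|\sum_{\mathbf a,\mathbf b}\chi\bigl(s(a_2-b_2-2a_1b_1)\bigr)\Bigr|^2 .
\]
The $s=0$ term is exactly $n^4/q$. It remains to show that the $s\neq0$ terms contribute at most $qn^2K_E$.

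\textbf{Step 2: fiber decomposition.} For $u\in\F_q$, let $A_u=\{a_2:(u,a_2)\in A\}$ and $B_v=\{b_2:(v,b_2)\in B\}$. Each of these sets has size at most $K_E$, by Lemma \ref{lem:K-preserved}. Then
\[
S(s):=\sum_{\mathbf a,\mathbf b}\chi\bigl(s(a_2-b_2-2a_1b_1)\bigr)=\sum_{u,v}\chi(-2suv)\,\widehat{1_{A_u}}(s)\,\overline{\widehat{1_{B_v}}(s)}.
\]
We view this as a bilinear form $\sum_{u,v}f_s(u)\chi(-2suv)\overline{g_s(v)}$, where $f_s(u)=\widehat{1_{A_u}}(s)$ and $g_s(v)=\widehat{1_{B_v}}(s)$. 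For $s\neq0$ the matrix $(\chi(-2suv))_{u,v}$ has operator norm $\sqrt q$, since it is a scaled DFT. Hence
\[
|S(s)|^2\le q\,\|f_s\|_2^2\,\|g_s\|_2^2 .
\]

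\textbf{Step 3: summing over $s$ (the main obstacle).} Summing the bound of Step 2 directly over $s$ yields a product of two sums over $s$ that we cannot decouple naively. We plan to apply Cauchy--Schwarz in $s$:
\[
\sum_s\|f_s\|_2^2\|g_s\|_2^2\le\Bigl(\sum_s\|f_s\|_2^4\Bigr)^{1/2}\Bigl(\sum_s\|g_s\|_2^4\Bigr)^{1/2}.
\]
Expanding $\|f_s\|_2^4=\sum_{u,u'}|\widehat{1_{A_u}}(s)|^2|\widehat{1_{A_{u'}}}(s)|^2$ and summing over all $s$ gives
\[
\sum_s\|f_s\|_2^4=q\sum_{u,u'}E_+(A_u,-A_{u'}).
\]
By Lemma \ref{lem:energy-trivial}, each energy is at most $|A_u||A_{u'}|K_E$. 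Therefore
\[
\sum_s\|f_s\|_2^4\le qK_E n^2,
\]
and similarly for $g$. Combining,
\[
\frac1q\sum_{s\ne0}|S(s)|^2\le\frac1q\cdot q\cdot qK_En^2=qn^2K_E,
\]
which is exactly the required bound. The delicate points are twofold. First, the operator-norm bound must be applied for each fixed $s\neq0$ before summing. Second, the energy identity must carry the correct power of $q$: in the unnormalized convention, $\sum_s|\hat h_1(s)|^2|\hat h_2(s)|^2=q\,E_+$. If the normalization turns out to be off by a factor of $q$, we would instead bound one factor pointwise by $\|f_s\|_2^2\le n K_E$. That alternative needs checking against the target, and we would adjust accordingly.
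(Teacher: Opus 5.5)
Your proposal is correct and follows essentially the same route as the paper. Both arguments use a character expansion with Plancherel in $t$, then the fiber functions $f_s,g_s$ with $|S(s)|^2\le q\|f_s\|_2^2\|g_s\|_2^2$ for $s\ne 0$ (your DFT operator-norm argument is the paper's Cauchy--Schwarz plus Plancherel step), then Cauchy--Schwarz in $s$ and the identity $\sum_s\|f_s\|_2^4=q\sum_{u,u'}E_+(A_u,A_{u'})$ bounded via Lemma~\ref{lem:energy-trivial}. You use the $\min$ form of that lemma directly while the paper uses the $|P|^{3/2}|Q|^{3/2}$ form, and both give $qK_En^2$; your normalization is right as written (and $E_+(A_u,-A_{u'})=E_+(A_u,A_{u'})$), so the fallback you mention at the end is not needed.
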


\begin{proof}
By the definition of the shear transformations, Lemma~\ref{lem:shear-identity} and the orthogonality relation \eqref{eq:orthogonality}, we write
\begin{equation}\label{eq:nu-fourier-expansion}
\nu(t)
=
\frac{1}{q}\sum_{s\in \F_q}\chi(-st)S(s),
\qquad\text{where}\quad
S(s):=
\sum_{\mathbf{a}\in A}\sum_{\mathbf{b}\in B}\chi\bigl(s(a_2-b_2+2a_1b_1)\bigr).
\end{equation}
The $s=0$ term contributes $\frac{n^2}{q}$. Setting $R_t:=\nu(t)-\frac{n^2}{q}$, we have $\sum\limits_{t\in \F_q} R_t=0$ and
\begin{equation}\label{eq:nu-split}
\sum_{t\in \F_q}\nu(t)^2
=
\frac{n^4}{q}+\sum_{t\in \F_q}R_t^2.
\end{equation}
Applying orthogonality to $\sum\limits_{t\in \F_q} R_t^2$ gives
\begin{equation}\label{eq:R-by-S}
\sum_{t\in \F_q}R_t^2
=
\frac{1}{q}\sum_{s\ne 0}|S(s)|^2.
\end{equation}
It remains to bound $\sum\limits_{s\ne 0}|S(s)|^2$.

For $x,y\in\F_q$, define vertical slices
\[
A_x:=\{u\in\F_q:(x,u)\in A\},
\qquad
B_y:=\{v\in\F_q:(y,v)\in B\},
\]
and for $s\ne 0$ set
\[
f_s(x):=\sum_{u\in A_x}\chi(su),
\qquad
g_s(y):=\sum_{v\in B_y}\chi(-sv).
\]
Writing $\mathbf{a}=(x,u)$ and $\mathbf{b}=(y,v)$, we rewrite \eqref{eq:nu-fourier-expansion} as
\[
S(s)=\sum_{(x,u)\in A}\sum_{(y,v)\in B}\chi(s(u-v+2xy))=\sum_{x\in\mathbb F_q}f_s(x)\sum_{y\in\mathbb F_q}g_s(y)\chi(-2sxy)=\sum_{x\in\F_q}f_s(x)\,\widehat{g_s}(-2sx).
\]
Define $U_s:=\sum\limits_{x\in\F_q}|f_s(x)|^2$ and $V_s:=\sum\limits_{y\in\F_q}|g_s(y)|^2$. Since $x\mapsto -2sx$ is a bijection of $\F_q$ for $s\ne 0$, the Cauchy--Schwarz inequality and Plancherel \eqref{lem:plancherel} give
\begin{equation*}
|S(s)|^2
\le
U_s\sum_{\xi\in\F_q}|\widehat{g_s}(\xi)|^2
=qU_sV_s.
\end{equation*}
Hence, by the Cauchy--Schwarz inequality in $s$,
\begin{equation}\label{eq:sumS-by-UV}
\sum_{s\ne 0}|S(s)|^2
\le
q\sum_{s\ne 0}U_sV_s
\le
q\left(\sum_{s\in\F_q}U_s^2\right)^{\frac{1}{2}}
\left(\sum_{s\in\F_q}V_s^2\right)^{\frac{1}{2}}.
\end{equation}

We now estimate $\sum\limits_{s\in \F_q} U_s^2$. Expanding and applying orthogonality \eqref{eq:orthogonality},
\[
\sum_{s\in\F_q}U_s^2
=
\sum_{s\in\mathbb F_q}\sum_{u,w\in A_x}\chi(s(u-w))\sum_{u',w'\in A_x}\chi(s(u'-w'))
=
q\sum_{x,x'\in\F_q}E_+(A_x,A_{x'}).
\]
By Lemma~\ref{lem:energy-trivial}, $E_+(A_x,A_{x'})\le|A_x|^{\frac{3}{2}}|A_{x'}|^{\frac{3}{2}}$, so
\[
\sum_{s\in\F_q}U_s^2
\le
q\left(\sum_{x\in\F_q}|A_x|^{\frac{3}{2}}\right)^2.
\]
We pause here to note that this is a fairly brusque way to estimate additive energy. We address this by providing a refined argument in Subsection \ref{sec:fiber-energy}. However, the estimate we use here will suffice for this argument. Also, to ease exposition, here and below, we write $K$ in place of $K_E,$ because context is clear. Since $|A_x|\le K$ for all $x$ (Lemma~\ref{lem:K-preserved}) and $\sum\limits_{x\in\F_q}|A_x|=n$,
\[
\sum_{x\in\F_q}|A_x|^{\frac{3}{2}}\le K^{\frac{1}{2}}\sum_{x\in\F_q}|A_x|=K^{\frac{1}{2}}n,
\]
which gives
\begin{equation}\label{eq:Us-bound}
\sum_{s\in\F_q}U_s^2\le qKn^2.
\end{equation}
The same argument applied to the slices of $B$ yields
\begin{equation}\label{eq:Vs-bound}
\sum_{s\in\F_q}V_s^2\le qKn^2.
\end{equation}
Combining \eqref{eq:sumS-by-UV}, \eqref{eq:Us-bound}, and \eqref{eq:Vs-bound},
\[
\sum_{s\ne 0}|S(s)|^2\le q^2Kn^2.
\]
Together with \eqref{eq:nu-split} and \eqref{eq:R-by-S}, this gives
\[
\sum_{t\in \F_q}\nu(t)^2
\le
\frac{n^4}{q}+qn^2K,
\]
completing the proof.
\end{proof}
With Proposition~\ref{prop:second-moment} in hand, it follows from  (\ref{eq111}) that
\[
|\Delta_P(E)|
\ge
\frac{|E|^4}{\dfrac{|E|^4}{q}+q|E|^2K}
=
\frac{q}{1+\dfrac{q^2K}{|E|^2}}.
\]
This completes the proof of Theorem~\ref{thm:main}.

\begin{proof}[Proof of Corollary~\ref{cor:alpha}]
By Theorem~\ref{thm:main}, it suffices to ensure $|E|^2\gg q^2K$. Under $K\ll|E|^\alpha$, this reduces to $|E|^{2-\alpha}\gg q^2$, that is, $|E|\gg q^{\frac{2}{2-\alpha}}$.
\end{proof}

\section{Some extensions of Theorem \ref{thm:main}}\label{extension-section}

Here, we state and prove two logical extensions of Theorem \ref{thm:main}. The first is a slight refinement of the original statement, and the second is a generalization to handle parabolic distances between two sets.

\subsection{A fiber-energy refinement}\label{sec:fiber-energy}
As mentioned in the proof Theorem \ref{thm:main}, one of the key obstructions was additive energy between various vertical lines. Rather than state this explicitly in the main result, we used a courser (yet effective) measure: the maximum number of points on any vertical line. Here, we state and prove a variant of Theorem \ref{thm:main} that does not make this simplification. So overall, the result is stronger, but the hypotheses also harder to verify directly.

Let $\nu(t)$ be the distance counting function introduced in Section~\ref{sec:proof-main}. For each $u\in \F_q$, define the vertical fiber
\begin{equation}\label{eq:def-Eu}
E_u:=\{y\in\F_q:(u,y)\in E\}.
\end{equation}
We also define the \emph{fiber alignment energy} of $E$ by
\begin{equation}\label{eq:def-fiber-energy}
\mathcal{E}_{\mathrm{fib}}(E)
:=
\sum_{u,u'\in\F_q}E_+(E_u,E_{u'}).
\end{equation}
The proof of Proposition~\ref{prop:second-moment} yields the following refinement once one retains the additive energies of the fibers instead of estimating them through the single parameter $K$.

\begin{theorem}\label{thm:fiber-energy}
Let $E\subset \F_q^2$ and write $n:=|E|$. Then
\[
\sum_{t\in\F_q}\nu(t)^2
\le
\frac{n^4}{q}+q\,\mathcal{E}_{\mathrm{fib}}(E).
\]
Consequently,
\[
|\Delta_P(E)|
\ge
\frac{n^4}{\dfrac{n^4}{q}+q\,\mathcal{E}_{\mathrm{fib}}(E)}
=
\frac{q}{1+\dfrac{q^2\mathcal{E}_{\mathrm{fib}}(E)}{n^4}}.
\]
\end{theorem}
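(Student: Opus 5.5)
We follow the proof of Proposition~\ref{prop:second-moment} verbatim up to the point where the additive energies of fibers appear, and then stop estimating. Concretely, we form the sheared sets $A,B$ of \eqref{eq:def-shears}, expand $\nu(t)$ as in \eqref{eq:nu-fourier-expansion}, and use \eqref{eq:nu-split} and \eqref{eq:R-by-S} to reduce matters to showing
\[
\sum_{s\ne 0}|S(s)|^2\le q^2\,\mathcal{E}_{\mathrm{fib}}(E).
\]
As before, Cauchy--Schwarz in $x$ together with Plancherel \eqref{lem:plancherel} gives $|S(s)|^2\le qU_sV_s$. Cauchy--Schwarz in $s$ then gives \eqref{eq:sumS-by-UV}, and the orthogonality computation yields
\[
\sum_s U_s^2=q\sum_{x,x'}E_+(A_x,A_{x'}),\qquad \sum_s V_s^2=q\sum_{y,y'}E_+(B_y',B_{y'}'),
\]
where $B'_y$ denotes the reflected fiber $-B_y$ coming from the character $\chi(-sv)$. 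Energy is invariant under negating both sets, so this reflection is harmless.

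The key new step is to identify these fiber energies with those of $E$. The shear $(x_1,x_2)\mapsto(x_1,x_2+x_1^2)$ maps the fiber $E_x$ to the translate $A_x=E_x+x^2$. Likewise $B_y=E_y-y^2$. Additive energy is translation invariant in each argument: the equation $a+b=a'+b'$ is preserved when $a,a'$ are shifted by $c$ and $b,b'$ by $d$. Hence $E_+(A_x,A_{x'})=E_+(E_x,E_{x'})$, and similarly $E_+(B_y,B_{y'})=E_+(E_y,E_{y'})$ (also after negation). Therefore
\[
\sum_s U_s^2=\sum_s V_s^2=q\,\mathcal{E}_{\mathrm{fib}}(E).
\]
Substituting into \eqref{eq:sumS-by-UV} gives $\sum_{s\ne0}|S(s)|^2\le q\cdot q\,\mathcal{E}_{\mathrm{fib}}(E)$, and dividing by $q$ via \eqref{eq:R-by-S} gives the claimed second-moment bound.

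The lower bound on $|\Delta_P(E)|$ then follows immediately from \eqref{eq111} with $n=|E|$. The only point requiring care is the bookkeeping of sign and translation for the $B$ fibers. One also uses that the $s=0$ terms of $U_s,V_s$ are included in the full sums over $s$, which only enlarges the right-hand side of \eqref{eq:sumS-by-UV}, so the bound still holds. We expect no genuine obstacle beyond this.
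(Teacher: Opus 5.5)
Your proposal is correct and follows the paper's proof: you rerun Proposition~\ref{prop:second-moment} up to \eqref{eq:sumS-by-UV}, then use $A_u=E_u+u^2$, $B_u=E_u-u^2$ and the translation invariance of additive energy to get $\sum_s U_s^2=\sum_s V_s^2=q\,\mathcal{E}_{\mathrm{fib}}(E)$. Your reflection bookkeeping for $-B_y$ is harmless but not actually needed: expanding $V_s^2$ directly gives the condition $v-w+v'-w'=0$, which already counts $E_+(B_y,B_{y'})$.
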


\begin{proof}
We retain the notation introduced in the proof of Proposition~\ref{prop:second-moment}. In particular, the derivation of \eqref{eq:nu-split}, \eqref{eq:R-by-S}, and \eqref{eq:sumS-by-UV} is unchanged, so it remains only to compute $\sum\limits_{s\in\F_q}U_s^2$ and $\sum\limits_{s\in\F_q}V_s^2$ more precisely.

For each $u\in\F_q$, the vertical slices of the shear sets $A$ and $B$ satisfy
\[
A_u=E_u+u^2,
\qquad
B_u=E_u-u^2.
\]
Since additive energy is invariant under translating either input set, it follows that
\[
E_+(A_u,A_{u'})=E_+(B_u,B_{u'})=E_+(E_u,E_{u'})
\qquad\text{for all }u,u'\in\F_q.
\]
Now the same computation used in the proof of Proposition~\ref{prop:second-moment} gives
\[
\sum_{s\in\F_q}U_s^2
=
q\sum_{u,u'\in\F_q}E_+(A_u,A_{u'})
=
q\,\mathcal{E}_{\mathrm{fib}}(E),
\]
and similarly,
\[
\sum_{s\in\F_q}V_s^2
=
q\sum_{u,u'\in\F_q}E_+(B_u,B_{u'})
=
q\,\mathcal{E}_{\mathrm{fib}}(E).
\]
Therefore \eqref{eq:sumS-by-UV} yields
\[
\sum_{s\ne 0}|S(s)|^2
\le
q\left(\sum_{s\in\F_q}U_s^2\right)^{\frac{1}{2}}
\left(\sum_{s\in\F_q}V_s^2\right)^{\frac{1}{2}}
=
q^2\,\mathcal{E}_{\mathrm{fib}}(E).
\]
Substituting this bound into \eqref{eq:nu-split} and \eqref{eq:R-by-S}, we obtain
\[
\sum_{t\in\F_q}\nu(t)^2
\le
\frac{n^4}{q}+q\,\mathcal{E}_{\mathrm{fib}}(E).
\]
The lower bound for $|\Delta_P(E)|$ now follows from \eqref{eq111} exactly as before.
\end{proof}

\begin{remark}
Theorem~\ref{thm:main} is an immediate consequence of Theorem~\ref{thm:fiber-energy}. Indeed, Lemma~\ref{lem:energy-trivial} gives
\[
\mathcal{E}_{\mathrm{fib}}(E)
\le
\sum_{u,u'\in\F_q}K|E_u| |E_{u'}|
=
K\left(\sum_{u\in\F_q}|E_u|\right)^2
=
Kn^2.
\]
\end{remark}

It is not uncommon to see additive energy play an active role in distance problems. In \cite{fraser2}, Fraser produced a host of results following from Fourier properties. Following this, in \cite{CGKPTZ25}, the authors used additive energy to measure the relevant Fourier properties, which in turn guaranteed stronger results for distance problems. One should note that the counterexamples in Section \ref{sec:prime-counterexample} consist of families of vertical lines with maximal fiber-energy.

\subsection{A bipartite extension}\label{sec:bipartite}

One can modify the analyses above to handle parabolic distances between two sets. For two sets $E,F\subset \F_q^2$, define the \emph{bipartite parabolic distance set} by
\[
\Delta_P(E,F):=\{\|\mathbf{x}-\mathbf{y}\|_P:\ \mathbf{x}\in E,\ \mathbf{y}\in F\}.
\]
We recall
\[
K_E:=\max_{u\in \F_q}\abs{E\cap (\{u\}\times \F_q)},
\qquad
K_F:=\max_{u\in \F_q}\abs{F\cap (\{u\}\times \F_q)}.
\]

\begin{theorem}\label{thm:bipartite}
Let $E,F\subset \F_q^2$. Then
\[
\sum_{t\in\F_q}\nu_{E,F}(t)^2
\le
\frac{|E|^2|F|^2}{q}
+
q\,|E|\,|F|\,\sqrt{K_EK_F},
\]
where
\[
\nu_{E,F}(t):=
\abs{\{(\mathbf{x},\mathbf{y})\in E\times F:\ \|\mathbf{x}-\mathbf{y}\|_P=t\}}.
\]
Consequently,
\[
\abs{\Delta_P(E,F)}
\ge
\frac{|E|^2|F|^2}
{\dfrac{|E|^2|F|^2}{q}+q\,|E|\,|F|\,\sqrt{K_EK_F}}
=
\frac{q}{1+\dfrac{q^2\sqrt{K_EK_F}}{|E|\,|F|}}.
\]
In particular, if $|E|\,|F|\gg q^2\sqrt{K_EK_F}$, then $\abs{\Delta_P(E,F)}\gg q$.
\end{theorem}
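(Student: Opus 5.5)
The plan is to repeat the proof of Proposition~\ref{prop:second-moment} with the shear applied to two different sets. Set
\[
A:=\{(x_1,x_2+x_1^2):\mathbf{x}\in E\},\qquad B:=\{(y_1,y_2-y_1^2):\mathbf{y}\in F\}.
\]
The identity in Lemma~\ref{lem:shear-identity} is purely algebraic, so it still holds. Hence $\nu_{E,F}(t)$ counts pairs $(\mathbf{a},\mathbf{b})\in A\times B$ with $a_2-b_2-2a_1b_1=t$. By Lemma~\ref{lem:K-preserved}, applied to each set separately, the vertical slices of $A$ have size at most $K_E$ and those of $B$ have size at most $K_F$. Also $|A|=|E|$ and $|B|=|F|$.

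Next I Fourier-expand exactly as in \eqref{eq:nu-fourier-expansion}. The $s=0$ term gives the mean $|E||F|/q$, and subtracting it yields
\[
\sum_t\nu_{E,F}(t)^2=\frac{|E|^2|F|^2}{q}+\frac1q\sum_{s\ne0}|S(s)|^2.
\]
Define $f_s,g_s,U_s,V_s$ from the slices $A_x$ and $B_y$ as before. The pointwise Cauchy--Schwarz and Plancherel step gives $|S(s)|^2\le qU_sV_s$. A second Cauchy--Schwarz in $s$ gives
\[
\sum_{s\ne0}|S(s)|^2\le q\Big(\sum_s U_s^2\Big)^{1/2}\Big(\sum_s V_s^2\Big)^{1/2}.
\]

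The only new point is that the two factors are now asymmetric. We have $\sum_s U_s^2=q\sum_{x,x'}E_+(A_x,A_{x'})$. By Lemma~\ref{lem:energy-trivial} this is at most $q\big(\sum_x|A_x|^{3/2}\big)^2\le qK_E|E|^2$, using $|A_x|\le K_E$ and $\sum_x|A_x|=|E|$. In the same way, $\sum_s V_s^2\le qK_F|F|^2$. Multiplying and taking square roots gives
\[
\sum_{s\ne0}|S(s)|^2\le q^2|E||F|\sqrt{K_EK_F}.
\]
Dividing by $q$ then gives the claimed second-moment bound.

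For the distance set, apply Cauchy--Schwarz to $\sum_t\nu_{E,F}(t)=|E||F|$, exactly as in \eqref{eq111}:
\[
|\Delta_P(E,F)|\ge\frac{|E|^2|F|^2}{\sum_t\nu_{E,F}(t)^2}.
\]
Inserting the second-moment bound and simplifying gives the stated formula. If $|E||F|\ge Cq^2\sqrt{K_EK_F}$, the denominator is at most $1+1/C$, so $|\Delta_P(E,F)|\gg q$. I expect no real obstacle. The one point to check carefully is that the Cauchy--Schwarz step in $s$ produces the geometric mean $\sqrt{K_EK_F}$ rather than $\max(K_E,K_F)$, and it does, because the $U$- and $V$-sums are bounded independently.
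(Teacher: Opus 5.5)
Your proposal is correct and follows the paper's proof essentially step for step. The paper uses the same shears $A$ (from $E$) and $B$ (from $F$), the same pointwise bound $|S(s)|^2\le qU_sV_s$ followed by Cauchy--Schwarz in $s$, the separate bounds $\sum_s U_s^2\le qK_E|E|^2$ and $\sum_s V_s^2\le qK_F|F|^2$ (which give the geometric mean $\sqrt{K_EK_F}$), and the final Cauchy--Schwarz on $\sum_t\nu_{E,F}(t)=|E|\,|F|$.
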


\begin{proof}
We follow the proof of Proposition~\ref{prop:second-moment}, replacing the two copies of $E$ by the pair $E,F$.

For $t\in \F_q$, define
\[
\nu_{E,F}(t):=
\abs{\{(\mathbf{x},\mathbf{y})\in E\times F:\ \|\mathbf{x}-\mathbf{y}\|_P=t\}}.
\]
Then
\[
\sum_{t\in\F_q}\nu_{E,F}(t)=|E|\,|F|.
\]
Hence, by Cauchy--Schwarz,
\begin{equation}\label{eq:bipartite-CS}
|E|^2|F|^2
=
\left(\sum_{t\in\F_q}\nu_{E,F}(t)\right)^2
\le
\abs{\Delta_P(E,F)}
\sum_{t\in\F_q}\nu_{E,F}(t)^2,
\end{equation}
so it remains to bound the second moment $\sum\limits_{t\in\F_q}\nu_{E,F}(t)^2$.

As in \eqref{eq:def-shears}, define
\[
A:=\{(x_1,x_2+x_1^2):\ \mathbf{x}=(x_1,x_2)\in E\},
\qquad
B:=\{(y_1,y_2-y_1^2):\ \mathbf{y}=(y_1,y_2)\in F\}.
\]
Then $|A|=|E|$ and $|B|=|F|$, and for $\mathbf{x}\in E$, $\mathbf{y}\in F$, with corresponding points
$\mathbf{a}\in A$, $\mathbf{b}\in B$, one has
\[
\|\mathbf{x}-\mathbf{y}\|_P
=
(x_2-y_2)+(x_1-y_1)^2
=
a_2-b_2-2a_1b_1.
\]
Therefore
\[
\nu_{E,F}(t)
=
\abs{\{(\mathbf{a},\mathbf{b})\in A\times B:\ a_2-b_2-2a_1b_1=t\}}.
\]

Using the orthogonality relation \eqref{eq:orthogonality}, we obtain
\[
\nu_{E,F}(t)
=
\frac{1}{q}\sum_{s\in \F_q}\chi(-st)S(s),
\]
where
\[
S(s):=
\sum_{\mathbf{a}\in A}\sum_{\mathbf{b}\in B}
\chi\bigl(s(a_2-b_2-2a_1b_1)\bigr).
\]
The $s=0$ term contributes $\frac{|E| |F|}{q}$. Setting
\[
R_t:=\nu_{E,F}(t)-\frac{|E| |F|}{q},
\]
the same computation as in \eqref{eq:nu-split} and \eqref{eq:R-by-S} gives
\[
\sum_{t\in\F_q}\nu_{E,F}(t)^2
=
\frac{|E|^2|F|^2}{q}
+
\frac{1}{q}\sum_{s\ne 0}|S(s)|^2.
\]
Thus it remains to estimate $\sum\limits_{s\ne 0}|S(s)|^2$.

For $x,y\in\F_q$, define the vertical slices
\[
A_x:=\{u\in\F_q:(x,u)\in A\},
\qquad
B_y:=\{v\in\F_q:(y,v)\in B\},
\]
and for $s\ne 0$ set
\[
f_s(x):=\sum_{u\in A_x}\chi(su),
\qquad
g_s(y):=\sum_{v\in B_y}\chi(-sv).
\]
Then
\[
S(s)=\sum_{x\in\F_q}f_s(x)\,\widehat{g_s}(-2sx).
\]
Define
\[
U_s:=\sum_{x\in\F_q}|f_s(x)|^2,
\qquad
V_s:=\sum_{y\in\F_q}|g_s(y)|^2.
\]
Exactly as in the proof of Proposition~\ref{prop:second-moment}, the Cauchy--Schwarz inequality and Plancherel \eqref{lem:plancherel} yield
\[
|S(s)|^2\le q\,U_sV_s
\qquad (s\ne 0),
\]
and hence
\begin{equation}\label{eq:bipartite-sumS}
\sum_{s\ne 0}|S(s)|^2
\le
q\left(\sum_{s\in\F_q}U_s^2\right)^{\frac{1}{2}}
\left(\sum_{s\in\F_q}V_s^2\right)^{\frac{1}{2}}.
\end{equation}

We now estimate the two factors separately. By the same expansion used in the proof of
Proposition~\ref{prop:second-moment},
\[
\sum_{s\in\F_q}U_s^2
=
q\sum_{x,x'\in\F_q}E_+(A_x,A_{x'}).
\]
Applying Lemma~\ref{lem:energy-trivial},
\[
E_+(A_x,A_{x'})
\le
|A_x|^{\frac32}|A_{x'}|^{\frac32},
\]
so
\[
\sum_{s\in\F_q}U_s^2
\le
q\left(\sum_{x\in\F_q}|A_x|^{\frac32}\right)^2.
\]
Since the shear preserves vertical fibers, we have
\[
\max_{x\in\F_q}|A_x|=K_E
\qquad\text{and}\qquad
\sum_{x\in\F_q}|A_x|=|E|.
\]
Therefore
\[
\sum_{x\in\F_q}|A_x|^{\frac32}
\le
K_E^{\frac{1}{2}}\sum_{x\in\F_q}|A_x|
=
K_E^{\frac{1}{2}}|E|,
\]
and hence
\[
\sum_{s\in\F_q}U_s^2\le qK_E|E|^2.
\]
By the same argument applied to $B$,
\[
\sum_{s\in\F_q}V_s^2\le qK_F|F|^2.
\]
Substituting these bounds into \eqref{eq:bipartite-sumS}, we find
\[
\sum_{s\ne 0}|S(s)|^2
\le
q^2|E|\,|F|\,\sqrt{K_EK_F}.
\]
Consequently,
\[
\sum_{t\in\F_q}\nu_{E,F}(t)^2
\le
\frac{|E|^2|F|^2}{q}
+
q\,|E|\,|F|\,\sqrt{K_EK_F}.
\]
Finally, combining this with \eqref{eq:bipartite-CS} gives
\[
\abs{\Delta_P(E,F)}
\ge
\frac{|E|^2|F|^2}
{\dfrac{|E|^2|F|^2}{q}+q\,|E|\,|F|\,\sqrt{K_EK_F}}
=
\frac{q}{1+\dfrac{q^2\sqrt{K_EK_F}}{|E|\,|F|}},
\]
as claimed.
\end{proof}

\begin{remark}
Retaining the fiber energies, exactly as in Section~\ref{sec:fiber-energy}, one obtains the sharper mixed bound
\[
\sum_{t\in\F_q}\nu_{E,F}(t)^2
\le
\frac{|E|^2|F|^2}{q}
+
q\bigl(\mathcal{E}_{\mathrm{fib}}(E)\mathcal{E}_{\mathrm{fib}}(F)\bigr)^{\frac{1}{2}}.
\]
Theorem~\ref{thm:bipartite} follows from this together with the estimate
\[
\mathcal{E}_{\mathrm{fib}}(E)\le K_E|E|^2,
\qquad
\mathcal{E}_{\mathrm{fib}}(F)\le K_F|F|^2.
\]
\end{remark}

\section{On the sharpness of Theorem \ref{thm:main}}\label{section6}

We now introduce variants of the results from Section \ref{sec:prime-counterexample} modified to suit the setting of Theorem \ref{thm:main}.

\begin{proposition}
Fix \(0<\varepsilon<1\). Then for all sufficiently large odd primes \(p\) there exists
\(E\subset \F_p^2\) such that, with
\[
K_E:=\max_{u\in\F_p}\bigl|E\cap (\{u\}\times \F_p)\bigr|,
\]
one has
\[
|E|\sim \frac{p\,K_E^{\frac{1}{2}}}{p^\varepsilon}
\qquad\text{and}\qquad
|\Delta_P(E)|=o(p).
\]
More precisely, one may arrange
\[
K_E\sim p^{1-\varepsilon},\qquad
|E|\sim p^{\frac32-\frac{3\varepsilon}{2}},\qquad
|\Delta_P(E)|\sim p^{1-\varepsilon}.
\]
\end{proposition}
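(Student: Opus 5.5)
We adapt the grid construction from Proposition \ref{prop:prime-counterexample}, now choosing the side lengths so that the vertical fibers are as long as the target $K_E$. Set
\[
M:=\big\lfloor p^{\frac12-\frac{\varepsilon}{2}}\big\rfloor,\qquad N:=\big\lfloor p^{1-\varepsilon}\big\rfloor,\qquad A:=\{0,1,\dots,M-1\},\qquad B:=\{0,1,\dots,N-1\},
\]
viewed as subsets of $\F_p$, and take $E:=A\times B$.

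Every vertical slice of $E$ is either empty or a copy of $B$. Hence $K_E=N\sim p^{1-\varepsilon}$ and $|E|=MN\sim p^{\frac32-\frac{3\varepsilon}{2}}$. Moreover,
\[
\frac{pK_E^{1/2}}{p^{\varepsilon}}\sim p^{1+\frac12-\frac{\varepsilon}{2}-\varepsilon}=p^{\frac32-\frac{3\varepsilon}{2}},
\]
so the relation $|E|\sim pK_E^{1/2}/p^{\varepsilon}$ follows directly.

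The distance count repeats the argument of Proposition \ref{prop:prime-counterexample}. Since $M<p^{1/2}$, the squares $k^2$ with $0\le k\le M-1$ are distinct residues. Every difference $a-a'$ is congruent to an integer in $[-(M-1),M-1]$, so
\[
\{(a-a')^2: a,a'\in A\}=\{0,1,4,\dots,(M-1)^2\}.
\]
With $D=B-B=\{-(N-1),\dots,N-1\}$ we get
\[
\Delta_P(E)=\bigcup_{k=0}^{M-1}(D+k^2).
\]
Consecutive squares differ by $2k+1\le 2M-1$, and $2M-1\le 2N-1$ because $\tfrac12-\tfrac{\varepsilon}{2}\le 1-\varepsilon$ for $\varepsilon<1$. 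So consecutive translates overlap, and the union is the integer interval $[-(N-1),\,N-1+(M-1)^2]$, of length
\[
(M-1)^2+2N-1\le p^{1-\varepsilon}+2p^{1-\varepsilon}\ll p^{1-\varepsilon}.
\]
For large $p$ this is less than $p$, so there is no wrap-around modulo $p$ and the count transfers verbatim to $\F_p$. The interval also contains $D$, so $|\Delta_P(E)|\ge 2N-1\gg p^{1-\varepsilon}$. Together these give $|\Delta_P(E)|\sim p^{1-\varepsilon}=o(p)$.

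The only real obstacle is the bookkeeping needed to balance the exponents. The parameters must keep $M^2\lesssim N$, so the distance set stays of size about $p^{1-\varepsilon}$. They must keep $M\le N$, so the translates overlap and yield an exact count. And they must keep $M<\sqrt p$ and $M^2+2N<p$ to avoid modular collisions. The choice $M^2\approx N\approx p^{1-\varepsilon}$ meets all of these constraints simultaneously for every $\varepsilon\in(0,1)$. It also makes the example match Theorem \ref{thm:main} up to the factor $p^{\varepsilon}$, showing that the condition $|E|\gg q\sqrt{K_E}$ cannot be weakened by any power of $p$.
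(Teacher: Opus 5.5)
Your proposal is correct and follows the paper's proof essentially step for step. You use the same grid $E=A\times B$ with $M=\lfloor p^{\frac12-\frac{\varepsilon}{2}}\rfloor$ and $N=\lfloor p^{1-\varepsilon}\rfloor$, identify $\Delta_P(E)=\bigcup_{k}(D+k^2)$ as a single integer interval of length $(M-1)^2+2N-1$, and make the same no-wrap-around check. Your explicit lower bound $|\Delta_P(E)|\ge 2N-1$ and your justification of $M\le N$ via $\frac{1-\varepsilon}{2}\le 1-\varepsilon$ are small clarifications that the paper leaves implicit.
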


\begin{proof}
Set
\[
M:=\Bigl\lfloor p^{\frac12-\frac{\varepsilon}{2}}\Bigr\rfloor,
\qquad
N:=\lfloor p^{1-\varepsilon}\rfloor,
\]
and define
\[
A:=\{0,1,\dots,M-1\}\subset \F_p,
\qquad
B:=\{0,1,\dots,N-1\}\subset \F_p,
\qquad
E:=A\times B\subset \F_p^2.
\]
Since each vertical fiber above \(a\in A\) has size \(N\), we have $K_E=N\sim p^{1-\varepsilon}$.
Hence
\[
|E|=|A| |B|=MN
\sim p^{\frac12-\frac{\varepsilon}{2}}\,p^{1-\varepsilon}
= p^{\frac32-\frac{3\varepsilon}{2}}
\sim \frac{p\sqrt{K_E}}{p^\varepsilon}.
\]
Now write \(\mathbf{x}=(a,b)\), \(\mathbf{y}=(a',b')\) with \(a,a'\in A\), \(b,b'\in B\). Then
\[
\|\mathbf{x}-\mathbf{y}\|_P=(b-b')+(a-a')^2.
\]
Let
\[
D:=B-B=\{-(N-1),\dots,N-1\},
\qquad
Q:=\{(a-a')^2:\ a,a'\in A\}.
\]
Since \(M<p^{\frac{1}{2}}\) for all large \(p\), the residues
$0^2,1^2,\dots,(M-1)^2$
are distinct mod \(p\), and every difference \(a-a'\) is represented by an integer in
\(\{-(M-1),\dots,M-1\}\). Thus
\[
Q=\{0^2,1^2,\dots,(M-1)^2\}.
\]
Therefore
\[
\Delta_P(E)=D+Q=\bigcup_{k=0}^{M-1}(D+k^2).
\]
Because \(N\ge M\), consecutive translates overlap:
\[
(k+1)^2-k^2=2k+1\le 2M-1\le 2N-1.
\]
So the union is a single interval of length
$(M-1)^2+2N-1$.

Moreover,
\[
(M-1)^2+2N-1\le M^2+2N \ll p^{1-\varepsilon}=o(p),
\]
so there is no modular wrap-around for large \(p\). Hence
\[
|\Delta_P(E)|=(M-1)^2+2N-1\sim p^{1-\varepsilon}=o(p).
\]
This completes the proof.
\end{proof}

\begin{proposition}\label{prop:prime-power-exact-sharpness}
Fix a rational number $0 < \varepsilon < 1$. Let $k \ge 2$ be an integer such that $\frac{1}{1-\varepsilon} \le k \le \frac{2}{1-\varepsilon}$. 
Let $q=p^{km}$ be an odd prime power where $m$ is chosen so that 
$\frac{1}{2}(1-\varepsilon)km$ is integer.
Then there exists a set $E\subset \F_q^2$ such that, with
\[
K_E:=\max_{x\in\F_q}\bigl|E\cap (\{x\}\times \F_q)\bigr|,
\]
one has
\[
|E| = \frac{q\,K_E^{\frac{1}{2}}}{q^\varepsilon}
\qquad\text{and}\qquad
|\Delta_P(E)|=o(q).
\]
More precisely, one may arrange
\[
K_E = q^{1-\varepsilon},\qquad
|E| = q^{\frac32-\frac{3\varepsilon}{2}},\qquad
|\Delta_P(E)| = q^{1-\varepsilon}.
\]
\end{proposition}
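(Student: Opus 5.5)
The plan is to adapt the subspace construction from the proof of Proposition~\ref{prop:prime-power-counterexample}. The key change is that the horizontal factor should be a \emph{subspace} of a subfield, not the whole subfield. Its size can then be tuned to $q^{\frac{1-\varepsilon}{2}}$, while it still has the property that squares of its differences stay inside a fixed subfield.

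Throughout, view $\F_q=\F_{p^{km}}$ as a $km$-dimensional vector space over $\F_p$. Write $d:=\tfrac12(1-\varepsilon)km$, which is an integer by hypothesis. Then $2d=(1-\varepsilon)km$ is also an integer. The construction has three steps.

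\begin{enumerate}
\item Let $L:=\F_{p^m}\subset \F_q$. This is a subfield because $m\mid km$, and it is an $\F_p$-subspace of dimension $m$.
\item Choose an $\F_p$-subspace $V\subset\F_q$ with $\dim V=2d$ and $L\subset V$.
\item Choose an $\F_p$-subspace $H\subset L$ with $\dim H=d$.
\end{enumerate}

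Both choices are where the hypothesis on $k$ enters. Step 2 needs $m\le 2d=(1-\varepsilon)km$, which is equivalent to $k\ge \frac{1}{1-\varepsilon}$. Step 3 needs $d\le m$, which is equivalent to $k\le\frac{2}{1-\varepsilon}$. Set $E:=H\times V$.

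Next we verify the three quantities.

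\begin{itemize}
\item Every vertical fiber of $E$ is either empty or equal to $\{h\}\times V$. Hence $K_E=|V|=p^{(1-\varepsilon)km}=q^{1-\varepsilon}$.
\item We have $|H|=p^{d}=q^{\frac{1-\varepsilon}{2}}$. Therefore
\[
|E|=|H|\,|V|=q^{\frac32-\frac{3\varepsilon}{2}}=q\cdot q^{\frac{1-\varepsilon}{2}}\cdot q^{-\varepsilon}=\frac{qK_E^{1/2}}{q^{\varepsilon}}.
\]
\item For the distance set, take $\mathbf{x}=(h,u)$ and $\mathbf{y}=(h',u')$ in $E$. Then $u-u'\in V$ because $V$ is an additive group. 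Also $h-h'\in H\subset L$, so $(h-h')^2\in L\subset V$ because $L$ is closed under multiplication. Hence $\|\mathbf{x}-\mathbf{y}\|_P\in V$, so $\Delta_P(E)\subseteq V$. Conversely, taking $h=h'$ and letting $u-u'$ range over $V$ gives $V\subseteq\Delta_P(E)$. Thus $|\Delta_P(E)|=q^{1-\varepsilon}=o(q)$, since $\varepsilon>0$ is fixed and $q\to\infty$ along the allowed prime powers.
\end{itemize}

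The only real obstacle is the dimension bookkeeping. The horizontal set must be small, of dimension $d$, while the subfield generated by its differences must still fit inside $V$, of dimension $2d$. Choosing $H$ as a subspace of the subfield $L$ rather than as $L$ itself is what decouples these two requirements, and the two-sided constraint on $k$ is exactly what makes $d\le m\le 2d$ possible. I would also check that $q$ is odd (true since $p$ is odd) and that $k\ge2$ makes $L$ a proper subfield. The latter is not strictly needed, because $V$ is a proper subspace whenever $\varepsilon>0$.
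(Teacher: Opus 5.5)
Your proposal is correct and matches the paper's proof essentially step for step. The paper also takes the subfield $\F_{p^m}$, a subspace $V\supseteq\F_{p^m}$ of dimension $(1-\varepsilon)km$, and a subspace of $\F_{p^m}$ of dimension $\tfrac12(1-\varepsilon)km$ as the horizontal factor, with the two-sided condition on $k$ used exactly as you use it. The only differences are notational: the paper calls the subfield $H$ and the horizontal factor $U$, and it obtains $V\subseteq\Delta_P(E)$ by fixing $u=u'=0$ rather than an arbitrary pair with $h=h'$.
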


\begin{proof}
By the choice of $k$, we have $1 \le k(1-\varepsilon) \le 2$. Let 
\[
H:=\F_{p^{m}}\subset \F_{p^{km}}=\F_q,
\qquad |H|=p^m=q^{\frac{1}{k}}.
\]
Viewing $\F_q$ as a $km$-dimensional vector space over $\F_p$, the subfield $H$ is an $\F_p$-subspace of dimension $m$. Since
$1 \le k(1-\varepsilon) \le 2$,
it follows that $\frac{1}{2}(1-\varepsilon)km \le m \le (1-\varepsilon)km$.
Therefore, we can choose an $\F_p$-subspace $V\subset \F_q$ of dimension $(1-\varepsilon)km$ containing $H$, and an $\F_p$-subspace $U\subset H$ of dimension $\frac{1}{2}(1-\varepsilon)km$. In particular,
\[
|V| = p^{(1-\varepsilon)km} = q^{1-\varepsilon},
\qquad
|U| = p^{\frac{1}{2}(1-\varepsilon)km} = q^{\frac{1-\varepsilon}{2}}.
\]

Define $E:=U\times V \subset \F_q^2$.
For this set, the vertical fibers are non-empty only over $x\in U$, and each such fiber is precisely $\{x\}\times V$. Thus,
\[
K_E = \max_{x\in\F_q}\bigl|E\cap (\{x\}\times \F_q)\bigr| = |V| = q^{1-\varepsilon}.
\]
The cardinality of $E$ is exactly
\[
|E| = |U| \, |V| = q^{\frac{1-\varepsilon}{2}}\cdot q^{1-\varepsilon} = q^{\frac{3}{2}-\frac{3\varepsilon}{2}}. 
\]
Notice that this precisely satisfies the target relation:
\[
\frac{q\,K_E^{\frac{1}{2}}}{q^\varepsilon} = \frac{q \cdot (q^{1-\varepsilon})^{\frac{1}{2}}}{q^\varepsilon} = q^{1 + \frac{1-\varepsilon}{2} - \varepsilon} = q^{\frac{3}{2}-\frac{3\varepsilon}{2}} = |E|.
\]
We now bound the size of the parabolic distance set $\Delta_P(E)$.
Let $\mathbf{x}=(u,v)$ and $\mathbf{y}=(u',v')$ be elements of $E$, so $u,u'\in U$ and $v,v'\in V$.
By definition, 
\[
\|\mathbf{x}-\mathbf{y}\|_P = (v-v')+(u-u')^2.
\]
Since $U\subset H\subset V$ and both $U$ and $V$ are additive subgroups of $\F_q$, we have $u-u'\in U\subset H$ and $v-v'\in V$. As $H$ is a subfield, it is closed under multiplication, hence $(u-u')^2\in H\subset V$. Consequently, both $v-v'$ and $(u-u')^2$ lie in $V$.
Therefore, the parabolic distance $\|\mathbf{x}-\mathbf{y}\|_P \in V$, which proves that $\Delta_P(E)\subset V$.

Conversely, since $0 \in U$, fixing $u=u'=0$ shows that the differences $v-v'$ cover all of $V$. Thus $\Delta_P(E) = V$.
Consequently,
\[
|\Delta_P(E)| = |V| = q^{1-\varepsilon} = o(q).
\]
The proof of the proposition is complete.
\end{proof}

Of course, in the setting of Theorem \ref{thm:main}, if the set $E$ under consideration has most of its mass concentrated on vertical lines with fewer than $K_E$ points, one could get better results for subsets of $E.$ That is, if there is a subset $F\subset E$ with $|F|\sim|E|,$ but $K_F$ is significantly smaller than $K_E,$ then applying Theorem \ref{thm:main} to the subset $F$ would give lower bounds on $|\Delta(F)|$ which would, in turn, imply lower bounds on $|\Delta(E)|.$ However, we do not pursue this further, as we have shown that the main obstructions appear to occur in cases where $E$ is relatively well-distributed across vertical slices, so even such a pruning scheme could not help in the extreme cases.

{\bf Acknowledgments.} 
Dung The Tran was supported by the research project QG.25.02 of Vietnam National University, Hanoi. He also would like to thank Vietnam Institute for Advanced Study in Mathematics (VIASM) for the hospitality and for the excellent working conditions.

\end{document}